\documentclass[12pt]{amsart}
\usepackage{amssymb,amsmath,amsfonts,latexsym}
\usepackage{bm}
\usepackage[all,cmtip]{xy}
\usepackage{amscd}

\usepackage[shortlabels]{enumitem}
\usepackage[colorlinks=true,
linkcolor=blue,
citecolor=blue,
urlcolor=blue]{hyperref}

\newcommand{\bea}{\begin{eqnarray}}
	\newcommand{\eea}{\end{eqnarray}}

\newcommand{\cla}{\mathcal{A}}
\newcommand{\clb}{\mathcal{B}}
\newcommand{\clc}{\mathcal{C}}
\newcommand{\cld}{\mathcal{D}}
\newcommand{\cle}{\mathcal{E}}

\newcommand{\clh}{\mathcal{H}}
\newcommand{\cli}{\mathcal{I}}
\newcommand{\clk}{\mathcal{K}}

\newcommand{\clm}{\mathcal{M}}
\newcommand{\cln}{\mathcal{N}}

\newcommand{\clr}{\mathcal{R}}
\newcommand{\cls}{\mathcal{S}}
\newcommand{\clt}{\mathcal{T}}

\newcommand{\z}{\bm{z}}

\newcommand{\K}{\bm{k}}

\newcommand{\T}{\mathbb{T}}
\newcommand{\D}{\mathbb{D}}
\newcommand{\C}{\mathbb{C}}
\newcommand{\Z}{\mathbb{Z}}

\def\textmatrix#1&#2\\#3&#4\\{\bigl({#1 \atop #3}\ {#2 \atop #4}\bigr)}
\def\dispmatrix#1&#2\\#3&#4\\{\left({#1 \atop #3}\ {#2 \atop #4}\right)}
\newcommand{\be}{\begin{equation}}
	\newcommand{\ee}{\end{equation}}
\newcommand{\ben}{\begin{eqnarray*}}
	\newcommand{\een}{\end{eqnarray*}}

\newcommand{\bi}{\begin{itemize}}
	\newcommand{\ei}{\end{itemize}}

\newcommand{\wh}{\widehat}

\newtheorem*{theoremA}{Theorem A}
\newtheorem*{theoremB}{Theorem B}
\newtheorem*{theoremC}{Theorem C}

\theoremstyle{definition}

\theoremstyle{plain}

\newtheorem{thm}{Theorem}[section]
\newtheorem{cor}[thm]{Corollary}
\newtheorem{lem}[thm]{Lemma}
\newtheorem{prop}[thm]{Proposition}
\theoremstyle{definition}
\newtheorem{defn}[thm]{Definition}

\numberwithin{equation}{section}

\let\phi=\varphi

\begin{document}
	
\begin{abstract}
		In this paper, we obtain a complete classification of Toeplitz + Hankel operators on the vector-valued Hardy space $H^2_{\mathcal{E}}(\mathbb{D}^n)$ over the polydisc $\mathbb{D}^n$ in $\mathbb{C}^n$ for $n\geq 1$. We also characterize the paired operators on $L^2(\T^n)$.
Furthermore, we give a complete characterization for the class of essentially Toeplitz + essentially Hankel operators on the vector-valued Hardy space $H^2_{\cle}(\D)$ for finite-dimensional Hilbert space $\cle$.
\end{abstract}
	
\title[Babbar, Javed, Maji]{Characterization of paired and Toeplitz + Hankel operators on the polydisc}
	
	
	\author[Babbar]{Kritika Babbar}
	\address{Indian Institute of Technology Roorkee, Department of Mathematics,
		Roorkee-247 667, Uttarakhand, India}
	\email{kritikababbariitr@gmail.com, kritika@ma.iitr.ac.in}
	
	\author[Javed]{Mo Javed}
	\address{Indian Institute of Technology Roorkee, Department of Mathematics,
		Roorkee-247 667, Uttarakhand, India}
	\email{javediitr07@gmail.com, mo\_j@ma.iitr.ac.in}
	
	\author[Maji]{Amit Maji}
	\address{Indian Institute of Technology Roorkee, Department of Mathematics,
		Roorkee-247 667, Uttarakhand, India}
	\email{amit.maji@ma.iitr.ac.in, amit.iitm07@gmail.com ({Corresponding author)}}
	
	\subjclass[2010]{47B35, 47A13, 47B38, 30H10, 32A10}
	\keywords{Toeplitz operators, Hankel operators, paired operators, Hardy space, inner function}
	
	\maketitle


\section{Introduction}\label{Sec: Introduction}
	The study of Toeplitz and Hankel operators has become a major area of interest for several mathematicians from the last few decades due to their rich structural properties and numerous applications in function theory, control theory, quantum physics, signal processing, time series analysis, and so on. A systematic study of Toeplitz operators was initiated after the seminal paper of Brown and Halmos \cite{Brown}, where they obtained the algebraic characterization of Toeplitz operators on the Hardy space $H^2(\D)$. On the other hand, the celebrated Nehari's theorem \cite{Nehari} characterized the bounded Hankel operators in terms of bounded symbols. Peller's monograph \cite{Peller} is a valuable source in the development of the theory of Hankel operators and their applications. Inspired by these developments, several mathematicians extended the theory of Toeplitz and Hankel operators in the polydisc setting. Recently, Maji, Sarkar, and Sarkar \cite{Maji} extended the Brown-Halmos type characterization in the polydisc setting. Cotlar and Sadosky \cite{Cotlar} initiated the systematic study of Hankel operators in the polydisc. Gu \cite{Gu} established the characterization results of Hankel operators on $H^2(\D^n)$ and also studied their structural properties and their relation with the Toeplitz operators. For further results regarding Toeplitz and Hankel operators on the polydisc, one may refer to \cite{Ding, Gu1}.

	The class of Toeplitz + Hankel operators has also emerged as a distinguishable class of operators that has been studied for many years. The invertibility and Fredholmness of Toeplitz + Hankel operators are studied extensively by many mathematicians \cite{Basor, Deift, Didenko, EhrhardtJFA}. In addition, their structural and algebraic properties on scalar as well as vector-valued Hardy spaces have been explored in several works (see \cite{Ehrhardt, GuJMAA, Sang} and the references within). Recently, Das, Das, and Sarkar \cite{N-DAS-S-DAS-SARKAR} characterized the Toeplitz + Hankel operators on the $\cle$-valued Hardy space $H^2_\cle(\D)$. In this article, we are concerned with the generalization of the notion of Toeplitz + Hankel operators in the polydisc setting. 
	More precisely, we provide a complete classification of Toeplitz + Hankel operators on $H^2_{\cle}(\D^n)$ as follows:

	\begin{theoremA}
		Let $\cle$ be a Hilbert space and $A\in \clb(H^2_{\cle}(\D^n))$. Then the following are equivalent:
		\begin{enumerate}
			\item $A=$ Toeplitz + Hankel.
			\item For all $j=1,2,\dots,n$, $T_{z_j}^*AT_{z_j}-A$ is a Hankel operator.
			\item For all $j=1,2,\dots,n$, $T_{z_j}^*A-AT_{z_j}$ is a Toeplitz operator.
			\item For all $j,k=1,2,\dots,n$, we have
			\[
			AT_{z_j}+T_{z_j}^*T_{z_k}^*AT_{z_k}=T_{z_j}^*A+T_{z_k}^*AT_{z_j}T_{z_k}.
			\]
		\end{enumerate}
	\end{theoremA}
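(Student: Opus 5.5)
The plan is to use the operator-theoretic definitions fixed earlier in the paper. An operator $T$ is \emph{Toeplitz} if $T_{z_k}^*TT_{z_k}=T$ for all $k$ (the Brown–Halmos type characterization of \cite{Maji}). An operator $H$ is \emph{Hankel} if $T_{z_k}^*H=HT_{z_k}$ for all $k$ (Gu \cite{Gu}). I would prove $(1)\Rightarrow(2),(3)$ by direct computation, then $(3)\Leftrightarrow(4)$, then the substantive implication $(3)\Rightarrow(1)$, and finally connect (2) to the rest.

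For $(1)\Rightarrow(2),(3)$, write $A=T+H$. Then $T_{z_j}^*AT_{z_j}-A=T_{z_j}^*HT_{z_j}-H=H(T_{z_j}^2-I)$, and this is Hankel because right multiplication by an analytic Toeplitz operator preserves the Hankel relation. Similarly $T_{z_j}^*A-AT_{z_j}=T_{z_j}^*T-TT_{z_j}$, since the Hankel part cancels. This difference is Toeplitz, because the $T_{z_k}$ commute and hence $T_{z_k}^*(T_{z_j}^*T-TT_{z_j})T_{z_k}=T_{z_j}^*T-TT_{z_j}$.

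For $(3)\Leftrightarrow(4)$, set $X_j=T_{z_j}^*A-AT_{z_j}$. Using $T_{z_j}T_{z_k}=T_{z_k}T_{z_j}$, we get
\[
T_{z_k}^*X_jT_{z_k}=T_{z_j}^*T_{z_k}^*AT_{z_k}-T_{z_k}^*AT_{z_j}T_{z_k}.
\]
So the identity in (4) is exactly $T_{z_k}^*X_jT_{z_k}=X_j$ for all $k$, i.e. $X_j$ is Toeplitz.

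The key step is $(3)\Rightarrow(1)$, where the Toeplitz part has to be extracted from $A$. I would fix a Banach limit $\mathrm{LIM}$ on $\ell^\infty(\Z_+^n)$ that is invariant under each coordinate shift $m\mapsto m+e_k$; such a limit exists because $\Z_+^n$ is an abelian, hence amenable, semigroup. With $A_m=T_{z^m}^*AT_{z^m}$, define $T$ weakly by
\[
\langle Tf,g\rangle=\mathrm{LIM}_m\langle A_mf,g\rangle,
\]
which is bounded with $\|T\|\le\|A\|$. Shift invariance gives $T_{z_k}^*TT_{z_k}=T$, since $T_{z_k}^*A_mT_{z_k}=A_{m+e_k}$, so $T$ is Toeplitz.

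Next, because $X_j$ is Toeplitz, we have $X_j=T_{z^m}^*X_jT_{z^m}=T_{z_j}^*A_m-A_mT_{z_j}$ for every $m$. Taking $\mathrm{LIM}$ of matrix coefficients yields $X_j=T_{z_j}^*T-TT_{z_j}$. Subtracting this from the definition of $X_j$ gives $T_{z_j}^*(A-T)=(A-T)T_{z_j}$ for every $j$, so $A-T$ is Hankel and $A=T+(A-T)$ is Toeplitz + Hankel.

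The remaining link, $(2)\Rightarrow(1)$ (or $(2)\Rightarrow(3)$), is where I expect the main obstacle. I would first try the same Banach-limit construction to produce $T$ and set $B=A-T$. Then $Y_j:=T_{z_j}^*BT_{z_j}-B$ is Hankel by (2) and $\mathrm{LIM}_mT_{z^m}^*BT_{z^m}=0$. Telescoping gives $T_{z_j}^{*k}BT_{z_j}^k-B=\sum_{i<k}Y_jT_{z_j}^{2i}$, and the task is to show this forces $Y_j=B(T_{z_j}^2-I)$ with $B$ Hankel.

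If that proves awkward, my fallback is to pass to coefficients. Write $a(\alpha,\beta)=\langle Az^\beta\eta,z^\alpha\zeta\rangle$. Condition (2) says that $a(\alpha+e_j,\beta+e_j)-a(\alpha,\beta)$ depends only on $\alpha+\beta$. Condition (3) says that $a(\alpha+e_j,\beta)-a(\alpha,\beta+e_j)$ depends only on $\alpha-\beta$. I would then show directly, coordinate by coordinate as in the one-variable argument of \cite{N-DAS-S-DAS-SARKAR}, that each condition forces $a(\alpha,\beta)=t(\alpha-\beta)+h(\alpha+\beta)$, with the boundedness of the two pieces again supplied by the Banach-limit operator $T$.
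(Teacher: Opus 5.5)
\textbf{The gap: (2) is never connected back to the other conditions.} You prove $(1)\Rightarrow(2)$, $(1)\Rightarrow(3)$, $(3)\Leftrightarrow(4)$ and $(3)\Rightarrow(1)$. For the direction out of (2), you only sketch a telescoping attempt and a coefficient-level fallback, and neither reaches a conclusion. So, as written, nothing shows that (2) implies the other conditions.

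There is in fact no obstacle here: (2) is the same family of identities as (3) and (4). Set $X_j=T_{z_j}^*A-AT_{z_j}$ and $Y_k=T_{z_k}^*AT_{z_k}-A$. Expanding and using $T_{z_j}T_{z_k}=T_{z_k}T_{z_j}$ (and the same for the adjoints) gives
\[
T_{z_j}^*Y_k-Y_kT_{z_j}
=T_{z_j}^*T_{z_k}^*AT_{z_k}-T_{z_k}^*AT_{z_j}T_{z_k}-T_{z_j}^*A+AT_{z_j}
=T_{z_k}^*X_jT_{z_k}-X_j .
\]
\begin{itemize}
\item The left side vanishing for all $j,k$ says every $Y_k$ is Hankel, which is (2).
\item The right side vanishing for all $j,k$ says every $X_j$ is Toeplitz, which is (3).
\item The middle expression vanishing is exactly (4).
\end{itemize}
This is how the paper handles (2): it rewrites (4) as $T_{z_j}^*(T_{z_k}^*AT_{z_k}-A)=(T_{z_k}^*AT_{z_k}-A)T_{z_j}$. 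You already did the identical computation for $(3)\Leftrightarrow(4)$. You just did not notice that the Hankel condition on $Y_k$ expands to the same expression.

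\textbf{Comparison of the $(3)\Rightarrow(1)$ step.} With that line added, your proof is complete, and your $(3)\Rightarrow(1)$ argument is a valid alternative to the paper's. The paper proceeds through the symbols:
\begin{itemize}
\item it writes $X_j=T_{\Phi_j}$ and derives $T_{z_j}^{*m}A-AT_{z_j}^m=T_{\Phi_j(z_j^m-\overline{z_j}^m)/(z_j-\overline{z_j})}$;
\item it combines the bound $2\|A\|$ with its lemma (if $\|\Phi(1-z_j^{2m})\|_\infty\le M$ for all $m$, then $\Phi\in L^\infty$) to get $\Phi_j/(\overline{z_j}-z_j)\in L^\infty$;
\item it shows these quotients coincide for all $j$ and takes the common quotient as the Toeplitz symbol.
\end{itemize}
Your shift-invariant Banach limit of $T_{z^m}^*AT_{z^m}$ over $m\in\mathbb{Z}_+^n$ builds the Toeplitz part directly, with $\|T\|\le\|A\|$ automatically. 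Compatibility across different $j$ comes for free from $X_j=T_{z_j}^*A_m-A_mT_{z_j}$ holding for every $m$. The price is that you need a multi-parameter invariant mean, for instance iterated one-variable Banach limits. In exchange, the paper's route gives an explicit formula for the Toeplitz symbol in terms of the symbol of $T_{z_j}^*A-AT_{z_j}$.
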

	
	Another important aspect of this paper is the study of paired operators.  For a Hilbert space $\clh$, let $P$ and $Q$ be orthogonal projections on $\clh$ such that $P + Q=I_\clh$. Let $X, Y \in \clb(\clh)$. The \textit{paired operator} $S_{X,Y}$ on $\clh$ is defined by 
	\[
	S_{X,Y}=XP+ YQ.
	\]
	Denoting $P_+=P_{H^2(\D^n)}$, the orthogonal projection of $L^2(\T^n)$ onto $H^2(\D^n)$ and $P_-=I_{L^2(\T^n)}-P_+$. Clearly, $P_{+}$ and $P_{-}$ are \emph{complementary projections}, i.e., $P_{+}+P_{-}=I_{L^2(\T^n)}$. We define the paired operator on $L^2(\T^n)$ as follows:
	
	\begin{defn}
		For $\phi,\psi \in L^\infty(\T^n)$, the paired operator $S_{\phi,\psi} : L^2(\T^n)\rightarrow L^2(\T^n)$ is defined as
		\[
		S_{\phi,\psi}=M_\phi P_+ +M_\psi P_-,
		\]
		where $M_{\phi}$ and $M_{\psi}$ are the Laurent operators on $L^2(\T^n)$.
	\end{defn}

	These paired operators are dilations of Toeplitz operators and are also referred to as \textit{singular integral operators} by several authors. The structural and algebraic properties of $S_{\phi,\psi}$ have been studied extensively over the years (see \cite{CGu, Mikhlin}). In \cite{N-DAS-S-DAS-SARKAR}, an algebraic characterization of paired operators on $L^2(\T)$ is obtained. So our next goal is to give a generalized version of this characterization on $L^2(\T^n)$. We obtain a characterization result for paired operators on $L^2(\T^n)$ as follows:
	\begin{theoremB}
		Let $A\in\clb(L^2(\T^n))$. Then the following are equivalent:
		\begin{enumerate}
			\item $A$ is a paired operator.
			\item $A=M_{z_j}^*AM_{z_j}P_{+}+M_{z_j}AM_{z_j}^*P_{-}$ for all $j=1,\dots,n$.
		\end{enumerate}
	\end{theoremB}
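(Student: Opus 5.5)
The plan is to handle each implication by exploiting two facts: each $M_{z_j}$ is a unitary on $L^2(\T^n)$ commuting with every Laurent operator $M_\phi$, and $H^2(\D^n)$ is invariant under $M_{z_j}$. Equivalently, $\operatorname{ran}P_-=L^2(\T^n)\ominus H^2(\D^n)$ is invariant under $M_{z_j}^*$. Hence
\[
P_+M_{z_j}P_+=M_{z_j}P_+,\quad P_-M_{z_j}P_+=0,\quad P_-M_{z_j}^*P_-=M_{z_j}^*P_-,\quad P_+M_{z_j}^*P_-=0 .
\]

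For (1)$\Rightarrow$(2), let $A=S_{\phi,\psi}$. Using the identities above,
\[
M_{z_j}^*S_{\phi,\psi}M_{z_j}P_+=M_{z_j}^*M_\phi P_+M_{z_j}P_+=M_{z_j}^*M_\phi M_{z_j}P_+=M_\phi P_+ .
\]
In the same way $M_{z_j}S_{\phi,\psi}M_{z_j}^*P_-=M_\psi P_-$. Adding the two gives (2).

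For (2)$\Rightarrow$(1), multiply (2) on the right by $P_+$ and by $P_-$ respectively. This gives
\[
AP_+=M_{z_j}^*AM_{z_j}P_+, \qquad AP_-=M_{z_j}AM_{z_j}^*P_- \qquad (j=1,\dots,n).
\]
Set $X=A|_{H^2(\D^n)}:H^2(\D^n)\to L^2(\T^n)$. The first identity says $X(z_jf)=z_jXf$ for all $f\in H^2(\D^n)$ and all $j$, so $X(z^kf)=z^kXf$ for every multi-index $k\ge 0$.

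Define $\tilde X$ on $\bigcup_{m}\bar z^{(m,\dots,m)}H^2(\D^n)$ by $\tilde X(\bar z^{k}f)=\bar z^{k}Xf$.
\begin{itemize}
\item It is well defined: if $\bar z^kf=\bar z^lg$, multiply by $z^{k+l}$ and use the intertwining property.
\item It is bounded by $\|A\|$, since multiplication by $\bar z^k$ is isometric.
\item Its domain is dense, because $\bar z^{(m,\dots,m)}H^2(\D^n)$ increases in $m$ and contains every trigonometric polynomial eventually.
\end{itemize}
Extend $\tilde X$ to a bounded operator on $L^2(\T^n)$. By construction it commutes with all $M_{z_j}$ and $M_{z_j}^*$. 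The commutant of $\{M_{z_j}\}$ on $L^2(\T^n)$ is $\{M_\phi:\phi\in L^\infty(\T^n)\}$, the standard fact that $L^\infty$ is maximal abelian. Hence $\tilde X=M_\phi$, and so $AP_+=M_\phi P_+$.

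Symmetrically, set $Y=A|_{\operatorname{ran}P_-}$. The second identity gives $Y(\bar z_jg)=\bar z_jYg$ for $g\in\operatorname{ran}P_-$, using that $\bar z_j g\in \operatorname{ran}P_-$. Extend $Y$ via $z^k g\mapsto z^kYg$ to the union $\bigcup_m z^{(m,\dots,m)}\operatorname{ran}P_-=\bigcup_m \bigl(z^{(m,\dots,m)}H^2(\D^n)\bigr)^\perp$. This union is dense because $\bigcap_m z^{(m,\dots,m)}H^2(\D^n)=\{0\}$. The same argument yields $AP_-=M_\psi P_-$, and therefore $A=AP_++AP_-=S_{\phi,\psi}$.

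The step requiring the most care is this extension. One must check well-definedness and density of the domain, especially on the $P_-$ side, where the relevant subspaces are orthocomplements. One must also correctly invoke the commutant theorem for $L^\infty(\T^n)$ acting on $L^2(\T^n)$.
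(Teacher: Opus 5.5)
Your proof is correct. The direction $(1)\Rightarrow(2)$ is the same computation as in the paper, but for $(2)\Rightarrow(1)$ you take a genuinely different route.

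\emph{The paper's route.} It works monomial by monomial. It derives $A(\z^{\K})=\z^{\K}A(1)$ for $\K\in\Z_+^n$, sets $\phi=A(1)$, shifts an arbitrary trigonometric polynomial into $\mathbb{P}_+$, and then proves $\phi\in L^\infty(\T^n)$ with its boundedness lemma: if $\|\phi p\|_2\le C\|p\|_2$ for all trigonometric polynomials $p$, then $\phi$ is bounded. On the $P_-$ side it introduces $\psi_j=z_jA(\overline{z_j})$ and proves separately that $\psi_1=\dots=\psi_n$. It then verifies $A(\z^{\K})=\z^{\K}\psi$ on $\Z^n\setminus\Z^n_+$ by splitting this set into the pieces $\cls_E$ and treating negative and nonnegative coordinates separately.

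\emph{Your route.} You extend $A|_{H^2(\D^n)}$ and $A|_{H^2(\D^n)^\perp}$ to bounded operators $\tilde X,\tilde Y$ on all of $L^2(\T^n)$ that commute with every $M_{z_j}$. You then conclude using the fact that $\{M_\phi:\phi\in L^\infty(\T^n)\}$ is maximal abelian.

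\emph{What each buys.} Your approach treats both halves uniformly and symmetrically. The paper's claim that the $\psi_j$ coincide, and its $\cls_E$ bookkeeping, are both absorbed into your single well-definedness check; for instance, $1=z_1\overline{z_1}=z_2\overline{z_2}$ forces $z_1A(\overline{z_1})=z_2A(\overline{z_2})$. The only structural inputs you need are:
\begin{itemize}
\item invariance of $H^2(\D^n)$ under $M_{z_j}$, and of $H^2(\D^n)^\perp$ under $M_{z_j}^*$;
\item density of the increasing unions, which you justify correctly via $\bigcap_m z^{(m,\dots,m)}H^2(\D^n)=\{0\}$.
\end{itemize}
The price is that you cite the commutant theorem, whose elementary core is exactly the paper's boundedness lemma.

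If you prefer to avoid that theorem, note that $\tilde X$ commutes with every $M_{\z^{\K}}$, $\K\in\Z^n$. Hence $\tilde Xp=p\,\tilde X(1)$ for every trigonometric polynomial $p$. The boundedness lemma then identifies $\tilde X=M_\phi$ with $\phi=\tilde X(1)=A(1)$. Likewise $\tilde Y=M_\psi$ with $\psi=\tilde Y(1)=z_jA(\overline{z_j})$. These are exactly the symbols the paper finds.
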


    Our next aim is to generalize the notion of Toeplitz +Hankel operators and to characterize the class of essential Toeplitz + essential Hankel operators on $H^2_{\cle}(\D)$, where $\cle$ is a finite-dimensional Hilbert space, in particular, for scalar-valued Hardy space $H^2(\D)$. We give an intrinsic characterization as follows:

    \begin{theoremC}
		Let $A \in \clb(H^2_{\cle}(\D))$. Then the following are equivalent:
		\begin{enumerate}
			\item $A \in \cle\clt+\cle\clh$.
			\item $T_z^*AT_z-A$ is an essentially Hankel operator.
			\item $T_z^*A-AT_z$ is an essentially Toeplitz operator.
			\item $AT_z+T_z^{*2}AT_z-T_z^*A-T_z^*AT_z^2$ is compact.
		\end{enumerate}
	\end{theoremC}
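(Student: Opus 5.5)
The proof goes through condition (4) as the hub. I assume, as in the paper's setup, that $A$ is \emph{essentially Toeplitz} when $T_z^*AT_z-A$ is compact, and \emph{essentially Hankel} when $T_z^*A-AT_z$ is compact. Set
\[
\sigma(X)=T_z^*XT_z-X, \qquad \delta(X)=T_z^*X-XT_z ,
\]
and write $\mathcal K$ for the ideal of compact operators. A direct expansion gives
\[
\delta(\sigma(A))=\sigma(\delta(A))=AT_z+T_z^{*2}AT_z-T_z^*A-T_z^*AT_z^2 .
\]
Hence (2)$\Leftrightarrow$(4) and (3)$\Leftrightarrow$(4) are immediate: (2) says $\delta(\sigma(A))\in\mathcal K$, (3) says $\sigma(\delta(A))\in\mathcal K$, and both are the operator in (4).

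For (1)$\Rightarrow$(2), write $A=T+H$ with $\sigma(T)\in\mathcal K$ and $\delta(H)\in\mathcal K$.
\begin{itemize}
\item $\delta$ maps compacts to compacts, so $\delta(\sigma(T))\in\mathcal K$.
\item $\delta(\sigma(H))=\sigma(\delta(H))\in\mathcal K$, since $\sigma$ also preserves $\mathcal K$.
\end{itemize}
Therefore $\sigma(A)$ is essentially Hankel.

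The substance is (4)$\Rightarrow$(1). Put $Y=\delta(A)$, which is essentially Toeplitz by (3). I would produce the Toeplitz part by averaging. Let $\mathrm{LIM}$ be a Banach limit and define $T$ as the WOT-Banach limit of $T_z^{*m}AT_z^m$. That is, $\langle Tf,g\rangle=\mathrm{LIM}_m\langle AT_z^mf,T_z^mg\rangle$, which is bounded by $\|A\|$. Shift invariance of $\mathrm{LIM}$ gives $T_z^*TT_z=T$ exactly, so $T$ is a (block) Toeplitz operator and in particular essentially Toeplitz. It remains to show that $H:=A-T$ is essentially Hankel, i.e. $\delta(A)-\delta(T)\in\mathcal K$. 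The telescoping identity
\[
\delta(T_z^{*m}AT_z^m)=T_z^{*m}YT_z^m
\]
gives $\delta(T)=\mathrm{LIM}_m T_z^{*m}YT_z^m$. Since $\sigma(Y)=K$ is compact,
\[
T_z^{*m}YT_z^m-Y=\sum_{k=0}^{m-1}T_z^{*k}KT_z^k .
\]
Thus we must show that the Banach limit of these partial sums is compact.

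This last step is the main obstacle. Each term satisfies $\|T_z^{*k}KT_z^k\|\to0$, because $T_z^{*k}\to0$ strongly and $K$ is compact. But the partial sums need not converge. If averaging fails, I would use $\dim\cle<\infty$ to replace $A$ by a better representative. I would invoke the Barr\'ia--Halmos description of essentially Toeplitz operators: $Y$ differs from an element of the Toeplitz algebra by a compact operator, with symbol $\Phi=\mathrm{LIM}_m T_z^{*m}YT_z^m$.

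I would then seek $T=T_\Psi$ with $\delta(T_\Psi)=T_{(\bar z-z)\Psi}$ matching $T_\Phi$ modulo compacts. When $\Phi/(\bar z-z)$ is not bounded, I would instead split $Y$ into a Toeplitz part plus a part annihilated modulo $\mathcal K$ by the averaging. Absorbing the compact error terms into the Hankel part then yields the decomposition $A\in\cle\clt+\cle\clh$, which proves (4)$\Rightarrow$(1).
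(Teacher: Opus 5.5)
Your treatment of (2)$\iff$(3)$\iff$(4) via $\delta\sigma=\sigma\delta$, and of (1)$\Rightarrow$(2), is correct and matches the paper. The gap is (4)$\Rightarrow$(1). The problem is not only an unproved compactness claim: the averaged operator is the wrong Toeplitz part, and in general no Toeplitz operator at all can serve as the essentially Toeplitz summand.

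Here is a counterexample. Take $\cle=\C$, $S=T_z$, a Blaschke product $u$ whose zeros $\lambda_k$ tend to $\zeta=i$, and $A=T_uT_{\bar u}=I-P_{K_u}$ with $K_u=H^2(\D)\ominus uH^2(\D)$.
\begin{itemize}
\item Since $T_\phi S-ST_\phi$ has rank at most one for every $\phi\in L^\infty(\T)$, we have $A\in\cle\clt$, so (1)--(4) hold.
\item We have $\langle (I-S^{*m}AS^m)h,h\rangle=\|(I-P_+)(z^m\bar u h)\|^2\to 0$, so $S^{*m}AS^m\to I$ strongly. Your $T$ is therefore $I$, and $H=A-T=-P_{K_u}$.
\item $P_{K_u}$ is in $\cle\clt$, so $P_{K_u}\in\cle\clh$ would force $(S^*-S)P_{K_u}\in\clk$. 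But the normalized reproducing kernels $\hat k_{\lambda_k}$ lie in $K_u$ (as $u(\lambda_k)=0$) and are weakly null unit vectors. They satisfy $\|(S^*-S)\hat k_{\lambda_k}\|^2=\int_{\T}|\bar\lambda_k-z|^2P_{\lambda_k}(z)\,d\mu(z)\to|\bar\zeta-\zeta|^2=4$, where $P_\lambda$ is the Poisson kernel. So $H\notin\cle\clh$.
\item No Toeplitz summand works either. If $A=T_\Psi+H$ with $H\in\cle\clh$, then $H\in\cle\clt\cap\cle\clh$, so $H(S-S^*)\in\clk$. Douglas's symbol homomorphism on the Toeplitz algebra then gives $(1-\Psi)(z-\bar z)=0$, i.e.\ $\Psi=1$, which returns us to $H=-P_{K_u}$.
\end{itemize}
This also defeats your fallback. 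Its premise is that $Y=\delta(A)$ equals $T_\Phi$ plus a compact. That fails even inside the Toeplitz algebra: here $\Phi=\bar z-z$ and $Y-T_\Phi=-\delta(P_{K_u})\notin\clk$. The kernel of the symbol map is the commutator ideal, not $\clk$. Any scheme that produces the first summand as some $T_\Psi$ cannot succeed.

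The missing idea is to divide $Y$ by $R=S-S^*$ \emph{inside the essential commutant of $S$}, letting the quotient be an arbitrary essentially Toeplitz operator. The paper does this in the Calkin algebra, in three steps.
\begin{enumerate}
\item It passes to limit operators, where $\wh S$ is unitary. It combines the uniform bound $\|\wh S^{*n}\wh A-\wh A\wh S^n\|\le2\|\wh A\|$ with the expansion $\wh S^{*2n}\wh A-\wh A\wh S^{2n}=\sum_{k=1}^n(\wh S^{2k-1}+\wh S^{*2k-1})\wh Y$ (and its odd analogue). Von Neumann's ergodic theorem for $\wh S^4$ off its fixed space, together with separate arguments on $\ker\wh R$ and on that fixed space, gives $\wh Y^*\wh Y\le\lambda^2\wh R^*\wh R$.
\item Because $X+\clk\mapsto\wh X$ is faithful, this becomes $Y^*Y\le\lambda^2R^*R$ modulo $\clk$. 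The Olsen--Pedersen factorization with $\cld=\{S+\clk\}$ then yields $Y=T'R+K_1$ with $T'\in\cle\clt$.
\item Then $S^*(A+T')-(A+T')S\in\clk$, so $A=(A+T')-T'\in\cle\clt+\cle\clh$.
\end{enumerate}
In the example above, $T'$ is necessarily essentially Toeplitz but not Toeplitz plus compact. Averaging toward a single Toeplitz symbol cannot reach such a $T'$.
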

	
	The remainder of the article is outlined as follows: In Section \ref{Sec: Preliminaries}, we give preliminary definitions, notations, and basic results that are required for subsequent sections. Section \ref{Sec: Toeplitz + Hankel} is devoted to classifying the Toeplitz + Hankel operators on the $\cle$-valued Hardy space over the polydisc $\D^n$. In Section \ref{Sec: Paired operators}, we give the characterization results for paired operators on $L^2(\T^n)$ have been presented in Section \ref{Sec: Paired operators}. Finally, essential Toeplitz + essentially Hankel operators are completely classified  for $H^2_{\cle}(\D)$ in Section \ref{Sec:ET+EH}.

	\section{Preliminaries}\label{Sec: Preliminaries}
	
	All Hilbert spaces considered in this paper are complex and separable. Let $\D=\{z\in\C : |z|<1\}$ be the open unit disc in the complex plane $\C$ and the unit circle $\T=\{z\in\C : |z|=1\}$ be the boundary of $\D$. For $n \geq 1$, we denote by $\D^n$ the open unit polydisc in $\C^n$ and $\T^n$ the distinguished boundary of $\D^n$. For a Hilbert space $\cle$, denote by $L^2_\cle(\T^n)$, the space of all Lebesgue measurable functions $f$ on $\T^n$ such that 
	\[
	\|f\|^2=\int_{\T^n} \left\|f(\zeta_1,\dots,\zeta_n)\right\|^2_{\cle}\, d\mu <\infty,
	\]
	where $d\mu$ is the normalised Lebegue measure on $\T^n$. 
	
	The \textit{$\cle$-valued Hardy space} $H^2_\cle(\D^n)$ over the polydisc $\D^n$ is the Hilbert space of all $\cle$-valued holomorphic functions $f$ on $\D^n$ such that
	\[
	\|f\|^2=\sup_{0\leq r<1}\left(\int_{\T^n}\left\|f(r\zeta_1, \dots, r\zeta_n)\right\|^2_\cle \, d\mu\right) <\infty.
	\]
	It is well-known that $H^2_{\cle}(\D^n)$ is isometrically isomorphic to the closed subspace of $L^2_\cle(\T^n)$ which consists of all such functions whose negative Fourier coefficients are zero and this characterization can be proved by means of radial limits, see \cite{sznagy} for complete details. Let $(T_{z_1}, \ldots, T_{z_n})$ be the $n$-tuple of shift operators, where
	\begin{equation}\label{Eq: Definition of shift T-zi}
		T_{z_i}f=z_i f, \quad (f \in H^2_{\cle}(\D^n)),
	\end{equation}
	for each $i\in\{1,2, \ldots, n\}$. For $\z=(z_1,\dots,z_n)\in \T^n$ and the multi-index $\K=(k_1,\dots,k_n)\in\Z^n$, we write $\z^{\K}=z_1^{k_1}\cdots z_n^{k_n}$. Furthermore, we denote $L^\infty_{\clb(\cle)}(\T^n)$ as the Banach algebra of all measurable functions $\Phi : \T^n \rightarrow \clb(\cle)$ such that
	\[
	\|\Phi\|_{\infty}=\underset{\z \in \T^n}{\text{ess\,sup}}\|\Phi(\z)\|_{\clb(\cle)} <\infty,
	\]
	where $\clb(\cle)$ is the space of all bounded linear operators on $\cle$. For each $\Phi \in L^\infty_{\clb(\cle)}(\T^n)$, the multiplication operator or \emph{Laurent operator} $M_\Phi: L^2_\cle(\T^n) \rightarrow  L^2_\cle(\T^n)$ is defined as
	\[
	M_\Phi f =\Phi f, \quad ( f \in L^2_\cle(\T^n)).
	\]
	Moreover, the \textit{Toeplitz operator} $T_\Phi \in \clb(H^2_{\cle}(\D^n)$ is defined as
	\[
	T_\Phi f=P_+M_\Phi f, \quad (f \in H^2_{\cle}(\D^n)),
	\]
	where $P_+$ denotes the orthogonal projection of $L^2_\cle(\T^n)$ onto $H^2_\cle(\D^n)$. Putting $\Phi=z_iI_\cle$ for $i \in \{1,\ldots, n\},$ we obtain the corresponding shift operator $T_{z_i}$ as defined in \eqref{Eq: Definition of shift T-zi}. Similarly, the \textit{Hankel operator} $H_\Phi \in \clb(H^2_{\cle}(\D^n))$ is defined by
	\[
	H_\Phi f=P_+ M_\Phi Jf, \quad (f \in H^2_{\cle}(\D^n)),
	\]
	where $J$ is defined on $L^2_\cle(\T^n)$ as
	
	\[
	(Jf)(\z)=f(\bar{\z}), \quad (f \in L^2_{\cle}(\T^n), \z \in \T^n),
	\]
	where $\bar{\z}=(\bar{z}_1,\dots,\bar{z}_n)$. $J$ is a unitary operator and 
	\[
	J^*=J, \quad J^2=I, \quad J(I-P)=PJ.
	\]

	
	The following results will be useful in the sequel. The Brown-Halmos type characterization for Toeplitz operators on the polydisc setting is as follows (see \cite{Maji}, \cite{ss}).  
	
	\begin{thm}
		A bounded operator $T$ on $H^2_\cle(\D^n)$ is a Toeplitz operator if and only if
		\[
		T_{z_i}^*TT_{z_i}=T \quad \mbox{for all~~~} i = 1,\ldots, n.
		\]
	\end{thm}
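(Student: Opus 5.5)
We prove the Brown--Halmos type characterization: a bounded $T$ on $H^2_\cle(\D^n)$ is Toeplitz if and only if $T_{z_i}^*TT_{z_i}=T$ for all $i$.

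\emph{Necessity.} Suppose $T=T_\Phi$. The projection $P_+$ commutes with the adjoint of multiplication by $z_i$ on the range of $P_+$ in the sense that
\[
T_{z_i}^*P_+ = P_+M_{\bar z_i}\big|_{\cdot}, \qquad M_{\bar z_i}M_\Phi M_{z_i}=M_\Phi .
\]
Hence, for $f\in H^2_\cle(\D^n)$,
\[
T_{z_i}^*T_\Phi T_{z_i}f=P_+\bar z_iP_+(\Phi z_if)=P_+\bar z_i\Phi z_i f=T_\Phi f .
\]
The middle equality holds because $\bar z_i$ maps $L^2\ominus H^2$ into itself, so $P_+\bar z_iP_+=P_+\bar z_i$.

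\emph{Sufficiency.} This is the substantive direction. Let $T$ satisfy the identities. By iterating them and using that the $T_{z_i}$ commute, we get
\[
T_{\z}^{*\K}\,T\,T_{\z}^{\K}=T \quad\text{for all multi-indices } \K\in\Z_+^n .
\]
The plan is to extend $T$ to a Laurent operator on $L^2_\cle(\T^n)$. Set $U_i=M_{z_i}$. Every trigonometric polynomial $g$ can be written as $g=\bar{\z}^{\K}h$ with $h\in H^2_\cle$. Define
\[
\widetilde T g = \bar\z^{\K}\, T h .
\]
\textbf{Well-definedness.} If $\bar\z^{\K}h=\bar\z^{\K'}h'$, pass to a common larger index $\K''$. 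It then suffices to check that $\bar\z^{\K+\mathbf m}Tz^{\mathbf m}h=\bar\z^{\K}Th$. This is not automatic, because $T\z^{\mathbf m}h$ need not equal $\z^{\mathbf m}Th$.

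The standard workaround is a limit. Define
\[
A_{\K}=U^{*\K}\,(TP_+)\,U^{\K}
\]
on $L^2_\cle$ (with $TP_+$ viewed in $\clb(L^2_\cle)$). The identity gives compatibility on the relevant subspaces: for $\mathbf m\geq\K$ and $g\in \bar\z^{\K}H^2$,
\[
\langle A_{\mathbf m}g,g'\rangle=\langle A_{\K}g,g'\rangle .
\]
Hence $\langle A_{\K}g,g'\rangle$ stabilizes for $g,g'$ trigonometric polynomials as $\K\to\infty$ componentwise. Since $\|A_\K\|\le\|T\|$, we obtain a bounded limit
\[
\widetilde T=\text{WOT-}\lim_{\K\to\infty} A_\K .
\]
This is the main technical step: the stabilization argument must use both polydisc directions simultaneously via the multi-index ordering.

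\textbf{Commutation and identification.} By construction $U_i^*\widetilde TU_i=\widetilde T$, since shifting the index by $e_i$ does not change the limit. As $U_i$ is unitary, $\widetilde T$ commutes with every $U_i$, hence with $U_i^*$. It therefore commutes with multiplication by all trigonometric polynomials.

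The classical fact that the commutant of $\{M_{z_i}\}$ on $L^2_\cle(\T^n)$ consists of the $M_\Phi$ with $\Phi\in L^\infty_{\clb(\cle)}(\T^n)$ then gives $\widetilde T=M_\Phi$. To prove this fact, set $\Phi e=\widetilde T(e)$ for constants $e\in\cle$. Then $\widetilde T p=\Phi p$ for all trigonometric polynomials $p$. Boundedness of $\widetilde T$ forces $\Phi$ to be essentially bounded, by testing against $p\,e$ and using density of polynomials in $L^2(\T^n)$ together with separability of $\cle$.

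Finally, for $f,g\in H^2_\cle$ we have $\langle A_\K f,g\rangle=\langle Tf,g\rangle$ for every $\K$. Therefore
\[
P_+\widetilde T|_{H^2}=T, \qquad\text{so } T=T_\Phi .
\]
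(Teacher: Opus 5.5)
The paper gives no proof of this statement. It is quoted from Maji--Sarkar--Sarkar and the second cited reference, so there is no in-text argument to compare with. Judged on its own, your argument is correct.

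The essential computations hold. For $g=\bar\z^{\K}h$, $g'=\bar\z^{\K}h'$ with $h,h'\in H^2_\cle(\D^n)$ and $\mathbf m\geq\K$,
\[
\langle A_{\mathbf m}g,g'\rangle=\langle T(\z^{\mathbf m-\K}h),\z^{\mathbf m-\K}h'\rangle=\langle Th,h'\rangle=\langle A_{\K}g,g'\rangle .
\]
The relation $U_i^*A_{\K}U_i=A_{\K+e_i}$ passes to the WOT limit along a cofinal subnet. Finally, $\langle A_{\K}f,g\rangle=\langle Tf,g\rangle$ on $H^2_\cle(\D^n)$ gives the compression identity.

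Three things need tidying.
\begin{itemize}
\item The compatibility identity needs $g'$, not only $g$, in $\bar\z^{\K}H^2_\cle(\D^n)$. For general $g'$ it fails: with $n=1$, $T=T_{\bar z}$, $g=1$, $g'=\bar z$ one gets $\langle A_1g,g'\rangle=1\neq0=\langle A_0g,g'\rangle$. The pairs of trigonometric polynomials you actually use are fine.
\item The paragraph on the naive formula $\widetilde Tg=\bar\z^{\K}Th$ should be deleted, since the limit operator is not given by it.
\item For infinite-dimensional $\cle$, the commutant step needs $\Phi(\z)$ to be defined first on a countable dense set of vectors, discarding countably many null sets. Only then can you extend using the a.e.\ bound $\|\Phi(\z)\|\leq\|\widetilde T\|$.
\end{itemize}

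Compare this with how the paper treats paired operators in Theorem B. There the symbol is read off directly as $A(1)$, and its boundedness comes from Lemma \ref{Lemma: bounded phi lemma}. Your route instead first builds a Laurent extension commuting with every $M_{z_i}$ and lets the commutant theorem supply the symbol. The same kind of boundedness lemma is used, only inside that step.

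Your route gains the estimate $\|\Phi\|_\infty=\|\widetilde T\|\leq\|T\|$. Together with uniqueness of the symbol, this gives $\|T_\Phi\|=\|\Phi\|_\infty$, which is exactly the norm identity the paper uses without comment in the proof of Theorem A. Your argument also uses only the commutativity of the shifts, so it works uniformly in $n$ and for every separable $\cle$.
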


	Similarly, the algebraic characterization for Hankel operators on $H^2(\D^n)$ is given in \cite{Foias}, \cite{Gu}. The characterization is true and can be proved verbatim for Hankel operators on $H^2_{\cle}(\D^n)$:
	
	\begin{thm}
		A bounded operator $T$ on $H^2_{\cle}(\D^n)$ is a Hankel operator if and only if
		\[
		T_{z_i}^*T=TT_{z_i} \quad \mbox{for all~~~} i = 1,\ldots, n.
		\]
	\end{thm}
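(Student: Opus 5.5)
We prove the forward direction by direct computation and the converse by constructing a symbol from the matrix entries of $T$. Write $H^2_\cle(\D^n)\subset L^2_\cle(\T^n)$ and recall $H_\Phi f = P_+ M_\Phi J f$.

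\emph{Necessity.} Let $T=H_\Phi$ and fix $i$. Then $TT_{z_i}f = P_+(\Phi\,\bar z_i\,Jf)$, since $J(z_if)=\bar z_i Jf$. On the other hand, $T_{z_i}^*T f = P_+(\bar z_i P_+(\Phi Jf))$. The difference is $P_+(\bar z_i P_-(\Phi Jf))$, and it vanishes because multiplication by $\bar z_i$ maps $P_-L^2_\cle(\T^n)$ into itself. Indeed, the range of $P_-$ is spanned by monomials $\z^{\K}e$ with some $k_l<0$, and multiplying by $\bar z_i$ keeps some exponent negative. Hence $T_{z_i}^*T=TT_{z_i}$ for every $i$.

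\emph{Sufficiency.} Suppose $T_{z_i}^*T=TT_{z_i}$ for all $i$. Fix $e,f\in\cle$ and multi-indices $\K,\bm m\in\Z_+^n$. Iterating the relation gives
\[
\langle T\z^{\K}e,\z^{\bm m}f\rangle=\langle Te,\z^{\K+\bm m}f\rangle .
\]
So the matrix of $T$ depends only on $\K+\bm m$, and $T$ is determined by the vectors $Te$, $e\in\cle$.

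The natural candidate symbol is $\Phi(\z)e=(Te)(\z)$, extended to $L^2_\cle(\T^n)$ on the positive Fourier side; formally, $P_+(\Phi\, J\z^{\K}e)=P_+(\bar\z^{\K}\,Te)$. The task is to show that $T=H_\Phi$ for some bounded $\Phi\in L^\infty_{\clb(\cle)}(\T^n)$. We do not get boundedness of this $\Phi$ for free, so we proceed differently.

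Define the operator $T' = TJ|_{?}$ appropriately, or better, reduce to the Toeplitz characterization. Let $U$ be the unitary of $L^2_\cle(\T^n)$ given by $Uf=\bar z_1\cdots\bar z_n\, Jf$. It maps $H^2_\cle(\D^n)$ onto $P_-$-type subspaces, and one constructs an operator on $L^2_\cle(\T^n)$ commuting with all $M_{z_i}$. Concretely, for each $\bm N\in\Z_+^n$ define $X_{\bm N}$ on the finite-codimension pieces $\bar\z^{\bm N}H^2_\cle$ by
\[
X_{\bm N}(\bar\z^{\bm N} h)=\z^{\bm N}\,T\big(J(\bar\z^{\bm N}h)\big)\text{-type expressions}.
\]
The intertwining relation shows these definitions are compatible as $\bm N$ grows. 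Their norms are bounded by $\|T\|$, so they define a bounded operator $X$ on $L^2_\cle(\T^n)$ with $XM_{z_i}=M_{z_i}X$ for all $i$. This follows the standard dilation argument used for the Brown–Halmos theorem in \cite{Maji}.

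An operator on $L^2_\cle(\T^n)$ commuting with all $M_{z_i}$ (and hence with all $M_{\bar z_i}$, by unitarity) is a Laurent operator $M_\Phi$ with $\|\Phi\|_\infty=\|X\|$. Tracing back, $T=P_+M_\Phi J|_{H^2_\cle(\D^n)}=H_\Phi$.

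\emph{Main obstacle.} The delicate step is setting up $X$ so that it is well defined and contractive relative to $\|T\|$. Rather than juggling $J$, we would first set $S=TJ$ restricted suitably and verify the identity $P_+M_{\bar z_i}S=SM_{\bar z_i}$-type relations. We would then mimic verbatim the proof of the preceding Toeplitz characterization, as the paper indicates (``can be proved verbatim''), via \cite{Gu}, with the vector-valued commutant-of-bilateral-shift theorem supplying $\Phi$.
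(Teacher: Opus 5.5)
Your necessity argument is correct, and so is your observation that $\langle T\z^{\K}e,\z^{\bm m}f\rangle$ depends only on $\K+\bm m$. The gap is the step you flag yourself: no Brown--Halmos-type compatibility argument produces a bounded $X$ on $L^2_\cle(\T^n)$ that commutes with every $M_{z_i}$ and satisfies $\|X\|\le\|T\|$. As written, $X_{\bm N}$ is not even defined, since $J(\bar{\z}^{\bm N}h)=\z^{\bm N}Jh$ is in general not in $H^2_\cle(\D^n)$.

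The obstruction is structural. $T$ only records the coefficients $\hat\Phi(\bm j)$, $\bm j\in\Z_+^n$, of a prospective symbol, whereas a Laurent operator needs them for all $\bm j\in\Z^n$. In the Toeplitz case, the maps $h\mapsto\bar{\z}^{\bm N}T(\z^{\bm N}h)$ act on the increasing subspaces $\bar{\z}^{\bm N}H^2_\cle(\D^n)$, whose union is dense, so every matrix entry of the dilation is determined. Here, translating $TJ=P_+M_\Phi|_{JH^2_\cle(\D^n)}$ by $M_{\z^{\bm N}}$ only exhibits compressions of $M_\Phi$ from $\overline{\mathrm{span}}\{\z^{\bm a}e:\bm a\le\bm N\}$ into $\overline{\mathrm{span}}\{\z^{\bm b}e:\bm b\ge\bm N\}$. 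These are the entries $\langle\hat\Phi(\bm b-\bm a)e,f\rangle$ with $\bm b-\bm a\in\Z_+^n$; all other entries of $X$ are never determined. Setting them to zero gives the analytic symbol $\Phi(\z)e=(Te)(\z)$, which is generally unbounded. For $n=1$ and $\cle=\C$, the Hilbert matrix $\langle Tz^k,z^m\rangle=1/(k+m+1)$ is bounded and satisfies $T_z^*T=TT_z$, but its analytic symbol $-\log(1-z)/z$ is not in $L^\infty(\T)$. Choosing the missing coefficients so that $\Phi$ becomes bounded is precisely a Nehari-type extension problem, and no limiting procedure of this kind solves it.

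So the missing ingredient is an extension theorem, not a dilation. The paper offers no argument of its own here; it only defers to the literature. For $n=1$ the needed result is Nehari's theorem (Page's theorem for $\clb(\cle)$-valued symbols), proved via commutant lifting, or in the scalar case via $H^1=H^2\cdot H^2$ and Hahn--Banach. For $n\ge2$ these tools do not carry over verbatim: commutant lifting has no general analogue for commuting tuples. Even for scalar symbols, the existence of a bounded symbol is the deep theorem of Ferguson--Lacey ($n=2$) and Lacey--Terwilleger (general $n$). That result rests on weak factorization of $H^1(\D^n)$ and gives only equivalence of norms, so the identity $\|\Phi\|_\infty=\|X\|\le\|T\|$ that your sketch predicts cannot come from a soft argument. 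For $\clb(\cle)$-valued symbols with $\dim\cle=\infty$, you would additionally have to justify that such a theorem is available at all.
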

	
	We also write $H^{\infty}(\D^n)$ as the Banach algebra of bounded analytic functions on $\D^n$. By means of radial limits, we also identify $H^{\infty}(\D^n)$ as the space $H^2(\D^n)\cap L^{\infty}(\T^n)$. We call a function $\theta\in H^{\infty}(\D^n)$ is \emph{inner} if
	\[
	|\theta(\z)|=1,\quad \text{a.e. }\z\in\T^n.
	\]
	
	The notion of Toeplitz and Hankel operators on Hardy Space $H^2(\D)$ is further generalized as  essentially Toeplitz operators, introduced by Barria and Halmos (cf. \cite{Barria}) and essentially Hankel operators, introduced by Avenda\~{n}o (cf. \cite{RMartin}). We are writing the analogous definition for vector-valued Hardy space $H^2_{\cle}(\D)$. 
    
	\begin{defn}[Essentially Toeplitz operator]
		A bounded operator $T$ on $H^2_{\cle}(\D)$ is said to be \emph{essentially Toeplitz} if $T_z^*TT_z-T$ is compact. The set of all essentially Toeplitz operators is denoted by $\cle\clt$.	
	\end{defn}
	
	Note that for a finite-dimensional Hilbert space $\cle$, $S$ is essentially unitary, i.e., $I-T_zT_z^*$ and $I-T_z^*T_z$ are compact operators. Thus $T$ is essentially Toeplitz if and only if $T_zT-TT_z \in \clk$ if and only if $T_z^*T-TT_z^* \in \clk$, where $\clk$ is the two-sided ideal of all compact operators in $\clb(H^2_{\cle}(\D))$. Furthermore, in that case, $\cle\clt$ is a C$^*$-algebra that contains $\clk$.
	
	\begin{defn}[Essentially Hankel operator]
		A bounded operator $T$ on $H^2_{\cle}(\D)$ is said to be \emph{essentially Hankel } if $T_z^*T-TT_z$ is compact. We write $\cle\clh$ as the set of all essentially Hankel operators on $H^2_{\cle}(\D)$.
	
	\end{defn}

	For a finite-dimensional Hilbert space $\cle$, $T \in \clb(H^2_{\cle}(\D))$ is essentially Hankel if and only if $T_z^*TT_z^*-T \in \clk$ which is further equivalent to $T_zTT_z-T \in \clk$. Clearly, all compact operators are in $\cle\clh$. The following result, concerning the products of essentially Toeplitz and essentially Hankel operators, can be found in \cite{RMartin}:
	
	\begin{lem}\label{lem:prod}
		Let $T\in \cle\clt$ and $H_1, H_2 \in \cle\clh$. Then $TH_1, H_1T \in \cle\clh$ and $H_1H_2 \in \cle\clt$. 
	\end{lem}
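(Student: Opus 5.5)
The plan is to work modulo the ideal $\clk$ of compact operators, writing $X\equiv Y$ for $X-Y\in\clk$. I assume, as in the remarks preceding the lemma, that $\cle$ is finite-dimensional. Then $T_z$ is essentially unitary, so $I-T_zT_z^*$ and $I-T_z^*T_z$ lie in $\clk$. Since $\clk$ is a two-sided ideal, $\equiv$ is preserved under left and right multiplication by bounded operators, and every product below can be manipulated one factor at a time.

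\textbf{Step 1: commutation rules.} First I record the working forms of the hypotheses.
\begin{itemize}
\item For $T\in\cle\clt$, the remark after the definition gives $T_zT\equiv TT_z$ and $T_z^*T\equiv TT_z^*$.
\item For $H\in\cle\clh$, the definition gives $T_z^*H\equiv HT_z$.
\item I also need the companion relation $T_zH\equiv HT_z^*$. Multiplying $T_z^*H\equiv HT_z$ on the left by $T_z$ and using $T_zT_z^*\equiv I$ gives $H\equiv T_zHT_z$. Multiplying this on the right by $T_z^*$ and using $T_zT_z^*\equiv I$ again gives $HT_z^*\equiv T_zH$.
\end{itemize}

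\textbf{Step 2: the three products.} Each claim then follows from a short chain of the rules above.
\begin{itemize}
\item For $TH_1$: $T_z^*(TH_1)\equiv T(T_z^*H_1)\equiv T(H_1T_z)=(TH_1)T_z$, so $TH_1\in\cle\clh$.
\item For $H_1T$: $T_z^*(H_1T)\equiv H_1T_zT\equiv (H_1T)T_z$, so $H_1T\in\cle\clh$.
\item For $H_1H_2$: $T_z(H_1H_2)\equiv H_1T_z^*H_2\equiv (H_1H_2)T_z$. By the remark characterizing $\cle\clt$ through $T_zX-XT_z\in\clk$, this gives $H_1H_2\in\cle\clt$.
\end{itemize}

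\textbf{Main obstacle.} The only real point is the derived relation $T_zH\equiv HT_z^*$ for essentially Hankel $H$, which is needed for the product $H_1H_2$. It relies on essential unitarity of $T_z$, and this is exactly where finite-dimensionality of $\cle$ is used. The same fact underlies the equivalent forms of essential Toeplitzness quoted from the preliminaries.
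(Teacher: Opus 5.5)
Your proof is correct. Working modulo $\clk$ with $T_z$ essentially unitary, the derived relation $T_zH\equiv HT_z^*$ together with the commutation rules for $\cle\clt$ gives each of the three products in one line. This is the standard direct argument; the paper itself gives no proof of this lemma and simply cites \cite{RMartin}.

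Your explicit assumption $\dim\cle<\infty$ is exactly right, and it is in fact necessary. For infinite-dimensional $\cle$, the projection $P_0=I-T_zT_z^*$ onto the constants satisfies $T_z^*P_0=0=P_0T_z$, so it is Hankel. Yet $P_0^2=P_0$, and $T_z^*P_0T_z-P_0=-P_0$ is not compact, so $P_0^2\notin\cle\clt$. Similarly $T_z^*(T_zP_0)-(T_zP_0)T_z=P_0$, so $T_zP_0\notin\cle\clh$.
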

	
	The class that interests us is $\cle\clt+\cle\clh$. For a finite-dimensional Hilbert space $\cle$, the class $\cle\clt+\cle\clh$ forms a C$^*$-algebra. The proof can be done on the same lines as that done for $H^2(\D)$ in \cite{RMartin}.

	\section{Toeplitz + Hankel operators on \texorpdfstring{$H^2_{\cle}(\D^n)$}{H2E(Dn)}}\label{Sec: Toeplitz + Hankel}

	The goal of this section is to characterize the Toeplitz + Hankel operators on $H^2_{\cle}(\D^n)$. We need the following lemma to prove the main theorem of this section.  Note that the one-variable version of the following lemma is given in \cite{N-DAS-S-DAS-SARKAR, Ehrhardt}.
	
	\begin{lem}\label{lem: bounded}
		Let $\cle$ be a Hilbert space and let $\Phi: \T^n \rightarrow \clb(\cle)$ be a measurable function. Suppose there exist $M>0$ and $i \in \{1,\ldots, n\}$ such that
		\[
		\| \Phi(\z)(1-z_i^{2m})\|_\infty \leq M
		\]
		for all $m \geq 1$ and $\z \in \T^n$ a.e. Then $\Phi \in L^\infty_{\clb(\cle)}(\T^n)$.
	\end{lem}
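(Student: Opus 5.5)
We will show directly that $\|\Phi(\z)\|_{\clb(\cle)}\le M$ for almost every $\z\in\T^n$. The idea is that the factor $1-z_i^{2m}$ has modulus at most $2$ and is small only on a thin set of $z_i$ that shrinks in a controlled way, so averaging over $m$ removes it. Fix $i$ as in the hypothesis. For each $m\ge 1$ let $E_m\subset\T^n$ be a null set off which $\|\Phi(\z)\|\,|1-z_i^{2m}|\le M$; here we use that $\|\Phi(\z)(1-z_i^{2m})\|=|1-z_i^{2m}|\,\|\Phi(\z)\|$, since $1-z_i^{2m}$ is a scalar. Put $E=\bigcup_m E_m$ together with the null set $\{\z : z_i=\pm1\}$. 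Then $E$ is a null set, being a countable union of null sets.

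Now fix $\z\notin E$ and write $z_i=e^{it}$ with $t\notin \pi\Z$. Then $|1-z_i^{2m}|=2|\sin(mt)|$, so
\[
\|\Phi(\z)\|\le \frac{M}{2|\sin (mt)|}\qquad\text{for every } m\ge 1 .
\]
The task is therefore to choose $m$ with $|\sin(mt)|$ as large as possible. We split into two cases according to whether $t/\pi$ is rational or irrational.

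\emph{Case 1: $t/\pi$ irrational.} The sequence $\{mt \bmod \pi\}$ is dense (indeed equidistributed), so there is $m$ with $|\sin(mt)|$ arbitrarily close to $1$. This gives $\|\Phi(\z)\|\le M/2$.

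\emph{Case 2: $t/\pi=p/q$ rational, not an integer.} Such $\z$ form a null set, because their $i$-th coordinate ranges over a countable set. We may therefore add them to $E$.

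Combining the two cases, $\|\Phi(\z)\|\le M/2$ for a.e. $\z$. In particular $\Phi$ is essentially bounded. Measurability of $\z\mapsto\|\Phi(\z)\|$ follows from the measurability hypothesis on $\Phi$ (taking weak or strong measurability with $\cle$ separable), so $\Phi\in L^\infty_{\clb(\cle)}(\T^n)$.

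The only subtle point is the order of quantifiers. The hypothesis holds for each $m$ off an $m$-dependent null set, and we must pick $m$ depending on $\z$. The countable union of the exceptional sets resolves this, and the remaining step, density of $\{mt\}$ modulo $\pi$ for irrational $t/\pi$, is the key input. If one prefers to avoid any number theory, the step can be replaced by a simpler choice that still suffices: for $t\notin\pi\Z$, pick $m=\lceil \pi/(2\operatorname{dist}(t,\pi\Z))\rceil$-type so that $mt$ lands modulo $\pi$ in $[\pi/4,3\pi/4]$. Then $|\sin(mt)|\ge 1/\sqrt2$, which gives the bound $M/\sqrt2$ and is enough for boundedness.
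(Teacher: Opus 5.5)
Your proof is correct. The paper itself gives no proof of this lemma. It only remarks that the one-variable version appears in the cited works of Das--Das--Sarkar and Ehrhardt, and leaves the passage to $\T^n$ implicit. Your argument supplies that passage directly and is self-contained. Because the hypothesis involves only the scalar factor $1-z_i^{2m}$, everything is pointwise in $\z$. The only measure-theoretic points are that the $m$-dependent exceptional sets have null union, and that $\{\z\in\T^n : z_i\in C\}$ is null for countable $C\subset\T$. You check both. You also obtain the bound $\|\Phi(\z)\|\le M/2$ a.e., which is sharp (take $\Phi\equiv \frac{M}{2}I_{\cle}$).

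Two small remarks on the write-up. First, the explicit choice in your last paragraph is off. With $d$ the distance from $t$ to $\pi\Z$, the choice $m=\lceil \pi/(2d)\rceil$ can fail: for $d=0.45\pi$ it gives $m=2$ and $|\sin(2t)|=\sin(0.1\pi)\approx 0.31$. The right choice is $m=\lceil \pi/(4d)\rceil$. Then $md\in[\pi/4,\pi/4+d)\subseteq[\pi/4,3\pi/4)$, so $|\sin(mt)|=|\sin(md)|\ge 1/\sqrt2$. With that correction this variant is actually the cleaner proof. It works for every $\z$ with $z_i\neq\pm1$ and needs neither the rational/irrational split nor the density of $\{mt \bmod \pi\}$.

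Second, you announce ``averaging over $m$'' but then take a supremum. A genuine averaging argument also works and is equally short. For $w=z_i^2\neq 1$ one has $\frac1N\sum_{m=1}^N|1-w^m|^2=2-\frac2N\,\mathrm{Re}\sum_{m=1}^N w^m\to 2$, since the geometric sums stay bounded. This gives $\|\Phi(\z)\|\le M/\sqrt2$ a.e.
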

	
	We are now ready to prove the main theorem of the section.
	
	\begin{thm}
		Let $A\in \clb(H^2_{\cle}(\D^n))$. Then the following are equivalent:
		\begin{enumerate}
			\item $A=$ Toeplitz + Hankel.
			\item For all $j=1,2,\dots,n$, $T_{z_j}^*AT_{z_j}-A$ is a Hankel operator.
			\item For all $j=1,2,\dots,n$, $T_{z_j}^*A-AT_{z_j}$ is a Toeplitz operator.
			\item For all $j,k=1,2,\dots,n$, we have
			\[
			AT_{z_j}+T_{z_j}^*T_{z_k}^*AT_{z_k}=T_{z_j}^*A+T_{z_k}^*AT_{z_j}T_{z_k}.
			\]
		\end{enumerate}
		\begin{proof}
			We first establish the implication $(1)\implies(4)$. Let
			\[
			A=T_{\Phi}+H_{\Psi}
			\]
			for some $\Phi, \Psi\in L^{\infty}_{\clb(\cle)}(\T^n)$. For all $j,k=1,2,\dots,n$, we have
			\begin{align*}
				AT_{z_j}+T_{z_j}^*T_{z_k}^*AT_{z_k} 
				&= (T_{\Phi}+H_{\Psi})T_{z_j} + T_{z_j}^*T_{z_k}^*(T_{\Phi}+H_{\Psi})T_{z_k}\\
				&= T_{\Phi}T_{z_j}+H_{\Psi}T_{z_j} + T_{z_j}^*(T_{z_k}^*T_{\Phi}T_{z_k})+T_{z_j}^*T_{z_k}^*H_{\Psi}T_{z_k}\\
				&= T_{\Phi}T_{z_j} + T_{z_j}^*H_{\Psi} + T_{z_j}^*T_{\Phi} + T_{z_k}^*H_{\Psi}T_{z_j}T_{z_k}\\
				&= T_{z_j}^*(T_{\Phi}+H_{\Psi}) + T_{\Phi}T_{z_j} + T_{z_k}^*H_{\Psi}T_{z_j}T_{z_k}\\
				&=  T_{z_j}^*A +T_{z_k}^*(T_{\Phi}+H_{\Psi})T_{z_j}T_{z_k}\\
				&= T_{z_j}^*A +T_{z_k}^*AT_{z_j}T_{z_k}.
			\end{align*} 
			Conversely, assume that (4) holds true. That is, 
			\[
			AT_{z_j}+T_{z_j}^*T_{z_k}^*AT_{z_k}=T_{z_j}^*A+T_{z_k}^*AT_{z_j}T_{z_k}, \quad \text{for all } 1\leq j,k\leq n.
			\]
			It follows from the above identity that
			\[
			T_{z_k}^*(T_{z_j}^*A-AT_{z_j})T_{z_k}=T_{z_j}^*A-AT_{z_j}, \quad \text{for all } 1\leq j,k\leq n,
			\]
			which implies that $T_{z_j}^*A-AT_{z_j}$ is a Toeplitz operator for all $j=1,2,\dots,n$. Therefore, for all $j=1,2,\dots,n$,  there exists $\Phi_j\in L^{\infty}_{\clb(\cle)}(\T^n)$ such that 
			\begin{equation}\label{Eq:Difference is Toeplitz for all j}
				T_{z_j}^*A-AT_{z_j}=T_{\Phi_j}, \quad (j=1,2,\dots,n).
			\end{equation}
			Multiplying \eqref{Eq:Difference is Toeplitz for all j} by $T_{z_j}$ and $T_{z_j}^*$ from left and right, respectively. We obtain
			\[
			T_{z_j}^*AT_{z_j}-AT_{z_j}^2=T_{z_j\Phi_j}
			\]
			and
			\[
			T_{z_j}^{*2}A-T_{z_j}^*AT_{z_j}=T_{\overline{z_j}\Phi_j}.
			\]
			Addition of the above two identites yields
			\[
			T_{z_j}^{*2}A-AT_{z_j}^2=T_{(z_{j}+\overline{z_j})\Phi_j}=T_{\left(\frac{z_j^2-\overline{z_j}^2}{z_j-\overline{z_j}}\right)\Phi_j}.
			\]
			For each variable $z_j$, we follow the same line of arguments as that of Theorem 2.2 in \cite{N-DAS-S-DAS-SARKAR} and obtain
			\[
			T_{z_j}^{*m}A-AT_{z_j}^m=T_{\left(\frac{z_j^m-\overline{z_j}^m}{z_j-\overline{z_j}}\right)\Phi_j}, \quad (m\geq 1).
			\]
			Now we know that
			\[
			\left\| \frac{z_j^m-\overline{z_j}^m}{z_j-\overline{z_j}}\Phi_j \right\|_{\infty}=\left\|T_{\left(\frac{z_j^m-\overline{z_j}^m}{z_j-\overline{z_j}}\right)\Phi_j}\right\|=\|T_{z_j}^{*m}A-AT_{z_j}^m\|\leq 2\|A\|.
			\]
			Applying Lemma \ref{lem: bounded}, we get
			\[
			\frac{\Phi_j}{\overline{z_j}-z_j}\in L^{\infty}_{\clb(\cle)}(\T^n), \quad (j=1,2,\dots,n).
			\]
			\textbf{Claim:} We prove the following for all $j,k=1,2,\dots,n$:
			\[
			\frac{\Phi_j}{\overline{z_j}-z_j}=\frac{\Phi_k}{\overline{z_k}-z_k}, 
			\]
			i.e.,
			\[
			\Phi_j(\overline{z_k}-z_k)=\Phi_k(\overline{z_j}-z_j), 
			\]
			i.e.,
			\[
			T_{\Phi_j(\overline{z_k}-z_k)}=T_{\Phi_k(\overline{z_j}-z_j)}, 
			\]
			i.e.,
			\[
			T_{z_k}^*T_{\Phi_j}-T_{\Phi_j}T_{z_k}=T_{z_j}^*T_{\Phi_k}-T_{\Phi_k}T_{z_j}.
			\]
			\emph{Proof of claim:} Using \eqref{Eq:Difference is Toeplitz for all j}, we have the  following:
			\begin{align*}
				T_{z_k}^*T_{\Phi_j}-T_{\Phi_j}T_{z_k}
				&= T_{z_k}^*(T_{z_j}^*A-AT_{z_j})-(T_{z_j}^*A-AT_{z_j})T_{z_k}\\
				&= T_{z_k}^*T_{z_j}^*A-T_{z_k}^*AT_{z_j}-T_{z_j}^*AT_{z_k}+AT_{z_j}T_{z_k}\\
				&= (T_{z_k}^*T_{z_j}^*A - T_{z_j}^*AT_{z_k})-(T_{z_k}^*AT_{z_j}-AT_{z_j}T_{z_k})\\
				&=T_{z_j}^*(T_{z_k}^*A-AT_{z_k})-(T_{z_k}^*A-AT_{z_k})T_{z_j}\\
				&=T_{z_j}^*T_{\Phi_k}-T_{\Phi_k}T_{z_j}
			\end{align*}
			for all $j,k=1,2,\dots,n$. This proves our claim. In that case, let $\Phi\in L^{\infty}_{\clb(\cle)}(\T^n)$ be such that
			\[
			\Phi=\frac{\Phi_j}{\overline{z_j}-z_j}, \quad  (j=1,2,\dots,n).
			\]
			That is,
			\[
			\Phi_j=\Phi(\overline{z_j}-z_j), \quad (j=1,2,\dots,n),
			\]
			which implies
			\[
			T_{\Phi_j}=T_{z_j}^*T_{\Phi}-T_{\Phi}T_{z_j}, \quad (1,2,\dots,n).
			\]
			Again using \eqref{Eq:Difference is Toeplitz for all j}, we have
			\begin{align*}
				T_{z_j}^*A-AT_{z_j}&=T_{z_j}^*T_{\Phi}-T_{\Phi}T_{z_j}, \quad (j=1,2,\dots,n)\\
				T_{z_j}^*(A-T_{\Phi})&=(A-T_{\Phi})T_{z_j}, \quad (j=1,2,\dots,n).
			\end{align*}
			Consequently, $A-T_{\Phi}$ is a Hankel operator. That is, there exists $\Psi\in L^{\infty}_{\clb(\cle)}(\T^n)$ such that 
			\[
			A=T_{\Phi}+H_{\Psi}.
			\]
			This establishes the equivalence of (1) and (4). Moreover, the equivalences $(2)\iff(4)$ and $(3)\iff(4)$ follows immediately once we rewrite the condition (4) as follows:
			\[
			T_{z_j}^*(T_{z_k}^*AT_{z_k}-A)=(T_{z_k}^*AT_{z_k}-A)T_{z_j}, \quad (j,k=1,2,\dots,n).
			\]
			which is same as saying that $T_{z_k}^*AT_{z_k}-A$ is a Hankel operator for all $k=1,2,\dots,n$. Similarly, we also rewrite condition (4) as follows:
			\[
			T_{z_k}^*(T_{z_j}^*A-AT_{z_j})T_{z_k}=T_{z_j}^*A-AT_{z_j}, \quad (j,k=1,2,\dots,n).
			\]
			which is equivalent to saying $T_{z_j}^*A-AT_{z_j}$ is a Toeplitz operator for all $j=1,2,\dots,n$. 
			
			This completes the proof of the theorem.
		\end{proof}
	\end{thm}	
	
	\section{Paired operators on \texorpdfstring{$L^2(\T^n)$}{L2Tn}}\label{Sec: Paired operators}
	
	The goal of this section is to classify paired operators on $L^2(\T^n)$.  Recall that $\{\z^{\K} : \K\in\Z^n\}$ is the standard orthonormal basis for $L^2(\T^n)$. We call the elements of $\text{span}\{\z^{\K} : \K\in\Z^n\}$ as the trigonometric polynomials in $L^2(\T^n)$ and denote this linear space by $\mathbb{P}$. Similarly, we call the members of $\text{span}\{\z^{\K} : \K\in\Z_+^n\}$ and $\text{span}\{\z^{\K} : \K\in\Z^n\setminus Z_+^n\}$ as the polynomials in $H^2(\D^n)$ and $H^2(\D^n)^{\perp}$, respectively, and denote
	\[
	\mathbb{P}_{+}=\text{span}\{\z^{\K} : \K\in\Z_+^n\},\quad \text{and} \quad 
	\mathbb{P}_{-}=\text{span}\{\z^{\K} : \K\in\Z^n\setminus \Z_+^n\}.
	\]
	It now follows that 
	\begin{equation}\label{Eq: P, P+, and P- closure}
		\overline{\mathbb{P}}=L^2(\T^n),\quad \overline{\mathbb{P}}_{+}=H^2(\D^n),\quad \text{and}\quad \overline{\mathbb{P}}_{-}=H^2(\D^n)^{\perp}.
	\end{equation}
	We also denote $P_{+}=P_{H^2(\D^n)}$, the orthogonal projection of $L^2(\T^n)$ onto $H^2(\D^n)$ and write $P_{-}=I_{L^2(\T^n)}-P_{+}$. Therefore
	\[
	P_{-}=P_{H^2(\D^n)^{\perp}}.
	\]
	From the fact that $H^2(\D^n)$ is invariant under $M_{z_j}$ for all $j=1,\dots,n$, we observe that
	\begin{equation}\label{Eq: P-MzP+=0}
		P_{+}M_{z_j}P_{+}=M_{z_j}P_{+}, \quad \text{or}\quad P_{-}M_{z_j}P_{+}=0,
	\end{equation}
	for all $j=1,\dots,n$.
	
	The following lemma is crucial in establishing the main theorem of this section.
	\begin{lem}\label{Lemma: bounded phi lemma}
		Let $\phi\in L^2(\T^n)$ satisfies 
		\[
		\|\phi p\|_2\leq C\|p\|_2
		\]
		for all $p\in \mathbb{P}$ and some $C>0$. Then $\phi\in L^{\infty}(\T^n)$.
	\end{lem}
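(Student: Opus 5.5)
We plan to prove Lemma \ref{Lemma: bounded phi lemma} by first extending the inequality from trigonometric polynomials to all of $L^\infty(\T^n)$, and then testing it against indicator functions of the sets where $|\phi|$ is large. This is the standard argument showing that a multiplier on $L^2$ is essentially bounded.

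\textbf{Step 1: extend the inequality to bounded functions.} Let $f\in L^\infty(\T^n)$. By Fej\'er's theorem on $\T^n$, take the product Fej\'er means $p_N=\sigma_N f\in\mathbb{P}$. These satisfy $\|p_N\|_\infty\le\|f\|_\infty$ and $p_N\to f$ in $L^2(\T^n)$. Passing to a subsequence, we may also assume $p_N\to f$ almost everywhere.

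Then $\phi p_N\to\phi f$ almost everywhere. By Fatou's lemma,
\[
\|\phi f\|_2\le\liminf_N\|\phi p_N\|_2\le C\liminf_N\|p_N\|_2=C\|f\|_2 .
\]
Because Fatou's lemma handles the limit on the left, we never need $\phi p_N\to\phi f$ in norm. This sidesteps any integrability question for $\phi f$, even though $\phi$ is only known to lie in $L^2$.

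\textbf{Step 2: test on the sets where $|\phi|$ is large.} Suppose, for contradiction, that $\phi\notin L^\infty(\T^n)$. Then for every $K>C$ the set $E_K=\{\z\in\T^n:|\phi(\z)|>K\}$ has positive measure. Take $f=\chi_{E_K}\in L^\infty$ in Step 1. This gives
\[
K^2\mu(E_K)\le\int_{E_K}|\phi|^2\,d\mu=\|\phi\chi_{E_K}\|_2^2\le C^2\mu(E_K).
\]
Hence $K\le C$, which contradicts the choice $K>C$. So $\mu(E_K)=0$ for every $K>C$, and therefore $\|\phi\|_\infty\le C$.

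\textbf{Expected obstacle.} The only delicate point is the approximation in Step 1, because we need a limiting argument that preserves the inequality without knowing in advance that $\phi$ is bounded. This is handled by combining the uniform bound $\|p_N\|_\infty\le\|f\|_\infty$ from Fej\'er kernels (which are positive with integral one) with an a.e.-convergent subsequence and Fatou's lemma.

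If one prefers to avoid Fej\'er means, there is an alternative. The inequality first extends by density, via the $L^2$-closure of $\mathbb{P}$ in \eqref{Eq: P, P+, and P- closure}, to a bounded operator on $L^2(\T^n)$. Evaluating that operator at $\chi_{E_K}$ through $L^2$-approximating polynomials, again with a subsequence and Fatou, gives the same conclusion.
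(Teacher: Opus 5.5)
Your proof is correct and complete. The paper states this lemma without any proof, so there is no argument of its own to set yours against. Your Fatou-based density argument is the standard elementary one, and it also yields the sharp bound $\|\phi\|_\infty\le C$.

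Two small streamlinings are possible.
\begin{enumerate}
\item The Fej\'er bound $\|p_N\|_\infty\le\|f\|_\infty$ is never used in Step 1. Fatou's lemma only needs $p_N\to f$ in $L^2(\T^n)$ and an a.e.\ convergent subsequence. So Step 1 holds verbatim for every $f\in L^2(\T^n)$ and any polynomial approximants, which is exactly your alternative.
\item The contradiction framing in Step 2 is unnecessary. The chain $K^2\mu(E_K)\le C^2\mu(E_K)$ directly forces $\mu(E_K)=0$ for each $K>C$. Then $\{|\phi|>C\}=\bigcup_m E_{C+1/m}$ is null.
\end{enumerate}

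For comparison, a more operator-theoretic route also works. Extend $p\mapsto\phi p$ to a bounded operator $X$ on $L^2(\T^n)$, note that $XM_{z_j}=M_{z_j}X$ for all $j$, and invoke the fact that the commutant of the bilateral shifts consists of the Laurent operators. This gives $X=M_\psi$ with $\psi\in L^\infty(\T^n)$ and $\psi=X1=\phi$. That route leans on the maximal-abelian property of $L^\infty(\T^n)$, which your argument avoids entirely.
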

	
	\begin{thm}\label{Thm: Paired operator theorem on L^2(T^n)}
		Let $A\in\clb(L^2(\T^n))$. Then the following are equivalent:
		\begin{enumerate}
			\item $A$ is a paired operator.
			\item $A=M_{z_j}^*AM_{z_j}P_{+}+M_{z_j}AM_{z_j}^*P_{-}$ for all $j=1,\dots,n$.
		\end{enumerate}
	\end{thm}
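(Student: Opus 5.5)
The easy direction is (1)$\implies$(2). Let $A=M_\phi P_++M_\psi P_-$. I would first check that $P_+M_{z_j}=M_{z_j}P_+$ on $P_+$-range and $P_-M_{z_j}^*=M_{z_j}^*P_-$ on $P_-$-range. These follow from \eqref{Eq: P-MzP+=0} together with its adjoint form $P_+M_{z_j}^*P_-=0$. They give $P_+M_{z_j}P_+=M_{z_j}P_+$ and $P_-M_{z_j}^*P_-=M_{z_j}^*P_-$. Then
\[
M_{z_j}^*AM_{z_j}P_+=M_{z_j}^*(M_\phi P_++M_\psi P_-)M_{z_j}P_+=M_{z_j}^*M_\phi M_{z_j}P_+=M_\phi P_+,
\]
since $P_-M_{z_j}P_+=0$. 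Similarly $M_{z_j}AM_{z_j}^*P_-=M_\psi P_-$, using $P_+M_{z_j}^*P_-=0$. Adding the two gives (2).

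For (2)$\implies$(1), I multiply (2) on the right by $P_+$ and by $P_-$ to get two separate identities:
\[
AP_+=M_{z_j}^*AM_{z_j}P_+ \quad\text{and}\quad AP_-=M_{z_j}AM_{z_j}^*P_-, \qquad j=1,\dots,n.
\]
Set $\phi=A1\in L^2(\T^n)$. From the first identity, for any monomial $\z^{\K}$ with $\K\in\Z_+^n$ we have $A\z^{\K}=M_{z_j}^*A(z_j\z^{\K})$, since $z_j\z^{\K}\in H^2$. Iterating (induction on $|\K|$) gives $A\z^{\K}=\z^{\K}A1=\phi\z^{\K}$. Hence $Ap=\phi p$ for $p\in\mathbb{P}_+$.

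On $\mathbb{P}_-$ I choose a base point. Set $\psi=\bar z_1 A(z_1^{-1})$, so that $A\bar z_1=\psi\bar z_1$. The second identity says $A\z^{\K}=z_jA(\bar z_j\z^{\K})$ whenever $\z^{\K}\in H^2(\D^n)^\perp$, noting that $\bar z_j\z^{\K}$ then stays in $H^2(\D^n)^\perp$. This lets me lower indices freely while staying in $\Z^n\setminus\Z^n_+$.

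The hard step is showing that every $\z^{\K}$ with $\K\notin\Z_+^n$ is connected to $\bar z_1$ by such moves, so that $A\z^{\K}=\psi\z^{\K}$. Two cases:
\begin{itemize}
\item If $k_1<0$: lower all other coordinates to very negative values, then adjust $k_1$. More simply, pick a large $N$ and pass from $\z^{\K}$ to the monomial $\z^{\K}\prod_{j}\bar z_j^{N}$ by lowering moves, which multiplies $A$-values consistently. Doing the same from $\bar z_1$ reaches the same monomial, so the two are related by multiplication by the monomial quotient.
\item If some other coordinate is negative: the same common-descendant argument works. Any two negative-type monomials both lower to the same monomial with all entries $\le -N$, and lowering never leaves $\Z^n\setminus\Z^n_+$.
\end{itemize}
Precisely, if $\z^{\K}$ lowers to $\z^{\mathbf L}$, then $A\z^{\K}=\z^{\K-\mathbf L}A\z^{\mathbf L}$. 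Hence $A\z^{\K}\bar{\z}^{\K}$ is independent of $\K$ and equals $\psi$. Therefore $Ap=\psi p$ for $p\in\mathbb{P}_-$.

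Finally I need boundedness. For $p\in\mathbb{P}$ we have $\|\phi P_+p\|=\|AP_+p\|\le\|A\|\|p\|$, so $\|\phi q\|\le\|A\|\|q\|$ for $q\in\mathbb{P}_+$. To apply Lemma \ref{Lemma: bounded phi lemma} I need the bound for all of $\mathbb{P}$. Any $p\in\mathbb{P}$ equals $\z^{-\mathbf N}q$ with $q\in\mathbb{P}_+$, and $|\z^{-\mathbf N}|=1$, so $\|\phi p\|=\|\phi q\|\le\|A\|\|p\|$. Thus $\phi\in L^\infty$. The same argument works for $\psi$, since multiplying by $\bar{\z}^{\mathbf N}$ maps any $p$ into $\mathbb{P}_-$.

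By density \eqref{Eq: P, P+, and P- closure} and boundedness, $A=M_\phi P_++M_\psi P_-$, which is (1). The main obstacle is the combinatorial connectivity on $\Z^n\setminus\Z^n_+$, which does not arise for $n=1$; the common-descendant argument handles it.
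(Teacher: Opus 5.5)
Your proof is correct and follows the paper's route. The (1)$\implies$(2) computation is the same, and for (2)$\implies$(1) you use the same splitting into $AP_+=M_{z_j}^*AM_{z_j}P_+$ and $AP_-=M_{z_j}AM_{z_j}^*P_-$, identify $A$ on monomials, apply Lemma~\ref{Lemma: bounded phi lemma} after shifting an arbitrary trigonometric polynomial into $\mathbb{P}_\pm$, and conclude by density. Your observation that $\K\mapsto\bar{\z}^{\K}A\z^{\K}$ is unchanged by lowering, and that any two indices in $\Z^n\setminus\Z^n_+$ have their componentwise minimum in that set, is a tidier form of the paper's two-step argument (first $\psi_1=\dots=\psi_n$, then the $\cls_E$ decomposition); the only slip is that the base point should read $\psi=z_1A(\bar z_1)$, not $\bar z_1A(z_1^{-1})$, as your requirement $A\bar z_1=\psi\bar z_1$ and your later formula $\psi=\bar{\z}^{\K}A\z^{\K}$ both show.
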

	\begin{proof}
		Suppose first that $A$ is a paired operator, i.e., 
		\[
		A=S_{\phi, \psi}=M_{\phi}P_{+}+M_{\psi}P_{-}
		\]
		for some $\phi,\psi\in L^{\infty}(\T^n)$. In view of \eqref{Eq: P-MzP+=0}, we note that
		\begin{align*}
			M_{z_j}^*AM_{z_j}P_{+}
			&= M_{z_j}^*S_{\phi,\psi}M_{z_j}P_{+}\\
			&= M_{z_j}^*(M_{\phi}P_{+}+M_{\psi}P_{-})M_{z_j}P_{+}\\
			&= M_{z_j}^*M_{\phi}P_{+}M_{z_j}P_{+} \\
			&= M_{z_j}^*M_{\phi}M_{z_j}P_{+}\\
			&= M_{\phi}P_{+}.
		\end{align*}
		By \eqref{Eq: P-MzP+=0}, we also have $P_{+}M_{z_j}^*P_{-}=0$, and hence
		\begin{align*}
			M_{z_j}AM_{z_j}^*P_{-}
			&= M_{z_j}S_{\phi,\psi}M_{z_j}^*P_{-}\\
			&= M_{z_j}(M_{\phi}P_{+}+M_{\psi}P_{-})M_{z_j}^*P_{-}\\
			&= M_{z_j}M_{\psi}P_{-}M_{z_j}^*P_{-} \\
			&= M_{z_j}M_{\psi}M_{z_j}^*P_{-}\\
			&= M_{\psi}P_{-}.
		\end{align*}
		Evidently, we have
		\[
		A=S_{\phi,\psi}=M_{z_j}^*S_{\phi,\psi}M_{z_j}P_{+} + M_{z_j}S_{\phi,\psi}M_{z_j}^*P_{-}
		,\quad (j=1,\dots,n).
		\]
		Thus, $(1)\implies (2)$ is established.
		
		Conversely, assume that condition (2) holds. That is,
		\begin{equation}\label{Eq: Condition (2) of paired operator theorem}
			A=M_{z_j}^*AM_{z_j}P_{+} + M_{z_j}AM_{z_j}^*P_{-}, \quad (j=1,\dots,n).
		\end{equation}
		Multiplying the above identity by $P_{+}$ from the right yields 
		\[
		AP_{+}=M_{z_j}^*AM_{z_j}P_{+}, \quad (j=1,\dots,n).
		\]
		which implies
		\begin{equation}\label{Eq: AM-zP+=M-zAP+}
			AM_{z_j}P_{+}=M_{z_j}AP_{+}, \quad (j=1,\dots,n).
		\end{equation}
		Moreover, for all $(m_1,\dots,m_n)\in\Z_{+}^n$ we have 
		\begin{equation}\label{Eq: projection condition for shift invariant}
			P_{+}M_{z_1}^{m_1}\cdots M_{z_n}^{m_n}P_{+}=M_{z_1}^{m_1}\cdots M_{z_n}^{m_n}P_{+}.
		\end{equation}
		
		Note that the identities in \eqref{Eq: AM-zP+=M-zAP+} and \eqref{Eq: projection condition for shift invariant} yield, for $\K=(k_1,\dots,k_n)\in\Z_{+}^n\setminus \{(0,0,\dots,0)\}$ (without loss of generality, assume that $k_1\geq 1$), that
		\begin{align*}
			A(\z^{\K}) 
			&= AM_{z_1}^{k_1}\cdots M_{z_n}^{k_n}P_{+}(1)\\
			&= AM_{z_1}(M_{z_1}^{k_1-1}\cdots M_{z_n}^{k_n}P_{+})(1)\\
			&= AM_{z_1}(P_{+}M_{z_1}^{k_1-1}\cdots M_{z_n}^{k_n}P_{+})(1)\\
			&= (AM_{z_1}P_{+})M_{z_1}^{k_1-1}\cdots M_{z_n}^{k_n}P_{+}(1)\\
			&= (M_{z_1}AP_{+})M_{z_1}^{k_1-1}\cdots M_{z_n}^{k_n}P_{+}(1)\\
			&= z_1A(P_{+}M_{z_1}^{k_1-1}\cdots M_{z_n}^{k_n}P_{+})(1)\\
			&= z_1AM_{z_1}^{k_1-1}\cdots M_{z_n}^{k_n}P_{+}(1).
		\end{align*}
		Hence, the repeated application of identities in \eqref{Eq: AM-zP+=M-zAP+} and \eqref{Eq: projection condition for shift invariant} yields, for $\K=(k_1,\dots,k_n)\in\Z_{+}^n$, that
		\begin{equation}\label{Eq: A(zk)=zk A(1)}
			A(\z^{\K})=z_1^{k_1}\cdots z_n^{k_n}A(1)=\z^{\K}A(1).
		\end{equation}
		Now we set
		\[
		\phi = A(1)\in L^2(\T^n).
		\]
		Therefore, \eqref{Eq: A(zk)=zk A(1)} implies that
		\[
		Aq=\phi q, \quad \text{for all } q\in \mathbb{P}_{+}.
		\]
		Assume that $p\in\mathbb{P}$ is arbitrary, then we can choose a $\K\in\Z_{+}^n$ such that $\z^{\K}p\in\mathbb{P}_{+}$. It now follows that
		\[
		\|\phi p\|_2=\|\phi \z^{\K}p\|_2=\|A(\z^{\K}p)\|_2\leq \|A\|\|p\|_2
		\]
		for all $p\in\mathbb{P}$. Applying Lemma \ref{Lemma: bounded phi lemma}, we immediately obtain $\phi\in L^{\infty}(\T^n)$. For $f\in H^2(\D^n)$, it follows from the facts recorded in \eqref{Eq: P, P+, and P- closure} that there exists a sequence $(q_m)_{m\geq 1}\subseteq \mathbb{P}_{+}$ such that
		\[
		q_m\to f \quad \text{in } L^2(\T^n).
		\]
		Thus,
		\begin{align*}
			\|Af-\phi f\|_2 
			& \leq \|Af-Aq_m\|_2+\|Aq_m-\phi f\|_2\\
			&\leq \|A\|\|f-q_m\|_2+\|\phi q_m-\phi f\|_2\\
			&\leq (\|A\|+\|\phi\|_{\infty})\|q_m-f\|_2\\
			&\longrightarrow 0
		\end{align*}
		Hence we obtain
		\[
		Af=\phi f,\quad \text{for all }f\in H^2(\D^n),
		\]
		or, 
		\begin{equation}\label{Eq: AP+=M phi P+}
			AP_{+}=M_{\phi}P_{+}. 
		\end{equation}
		Again, we observe from \eqref{Eq: Condition (2) of paired operator theorem} that $AP_{-}=M_{z_j}AM_{z_j}^*P_{-}$, i.e.,
		\begin{equation}\label{Eq: A M-z* P- = M-z* A P-}
			AM_{z_j}^*P_{-}=M_{z_j}^*AP_{-},\quad (j=1,\dots,n).
		\end{equation}
		Furthermore, we note that for all $(m_1,\dots,m_n)\in\Z^n_{+}$,
		\begin{equation}\label{Eq: projection condition for shift co-invariant}
			P_{-}M_{z_1}^{*m_1}\cdots M_{z_n}^{*m_n}P_{-}=M_{z_1}^{*m_1}\cdots M_{z_n}^{*m_n}P_{-}.
		\end{equation} 
		For each $j=1,\dots,n$ and $k_j\geq 2$ we have
		\begin{align*}
			A(\overline{z_j}^{k_j})
			&= AM_{z_j}^{*k_j-1}P_{-}(\overline{z_j})\\
			&= AM_{z_j}^*M_{z_j}^{*k_j-2}P_{-}(\overline{z_j})\\
			&= AM_{z_j}^*(P_{-}M_{z_j}^{*k_j-2}P_{-})(\overline{z_j}) \tag{using \eqref{Eq: projection condition for shift co-invariant}}\\
			&= (AM_{z_j}^*P_{-})(M_{z_j}^{*k_j-2}P_{-})(\overline{z_j})\\
			&= (M_{z_j}^*AP_{-})(M_{z_j}^{*k_j-2}P_{-})(\overline{z_j})\tag{using \eqref{Eq: A M-z* P- = M-z* A P-}}\\
			&= \overline{z_j}A(P_{-}M_{z_j}^{*k_j-2}P_{-})(\overline{z_j})\\
			&= \overline{z_j}AM_{z_j}^{*k_j-2}P_{-}(\overline{z_j}),
		\end{align*}
		therefore repeatedly applying \eqref{Eq: A M-z* P- = M-z* A P-} and \eqref{Eq: projection condition for shift co-invariant}, we obtain
		\begin{equation}\label{Eq: A(zj bar ^ k) = zj bar ^k (zjA(zj))}
			A(\overline{z_j}^{k_j})=\overline{z_j}^{k_j-1}A(\overline{z_j})=\overline{z_j}^{k_j}(z_jA(\overline{z_j})).
		\end{equation}
		which holds for all $k_j\geq 1$.
		Set 
		\[
		\psi_j=z_jA(\overline{z_j})\in L^2(\T^n),\quad (j=1,\dots,n).
		\]
		We now claim the following:
		\begin{enumerate}[label=(\alph*)]
			\item $\psi_1=\psi_2=\dots=\psi_n(=\psi, \text{ say})$.
			\item $Ap=\psi p$ for all $p\in\mathbb{P}_{-}$.
			\item $\psi\in L^{\infty}(\T^n)$.
		\end{enumerate}
		It follows from \eqref{Eq: Condition (2) of paired operator theorem} that
		\[
		A(\overline{z_j})=0 + M_{z_k}AM_{z_k}^*P_{-}(\overline{z_j})=M_{z_k}AM_{z_k}^*M_{z_j}^*(1),\quad (j,k=1,\dots,n),
		\]
		so that 
		\[
		z_jA(\overline{z_j})=M_{z_j}M_{z_k}AM_{z_k}^*M_{z_j}^*(1),\quad (j,k=1,\dots,n).
		\]
		Therefore,
		\[
		z_jA(\overline{z_j})=z_kA(\overline{z_k}),\quad \text{or}\quad \psi_j=\psi_k,\quad (j,k=1,\dots,n).
		\]
		This proves claim (a). Thus, from \eqref{Eq: A(zj bar ^ k) = zj bar ^k (zjA(zj))} we have 
		\[
		A(\overline{z_j}^{k_j})=\psi \overline{z_j}^{k_j},\quad (k_j\geq 1, j=1,\dots,n).
		\]
		To establish claim (b), suppose now that $\emptyset \neq E\subseteq \{1,\dots,n\}$ and consider the following set
		\[
		\cls_E=\{\K=(k_1,\dots,k_n)\in\Z^n : k_j<0 \text{ if } j\in E \text{ and } k_{j}\geq 0 \text{ if } j\notin E\}.
		\]
		Clearly,
		\[
		\Z^n\setminus\Z^n_{+}=\bigcup_{\emptyset \neq E\subseteq \{1,\dots,n\}}\cls_E.
		\]
		First, using \eqref{Eq: Condition (2) of paired operator theorem}, we observe the following interesting fact that
		\[
		A(z_j\z^{\K})=z_jA(\z^{\K}),
		\]
		for all $1\leq j\leq n$ and $\K\in\Z^n$ as long as $z_j\z^{\K}\in \mathbb{P}_{-}$.
		
		Now fix an arbitrary $\emptyset\neq E\subseteq\{1,\dots,n\}$ and $\K=(k_1,\dots,k_n)\in \cls_E$ so that $\z^{\K}\in \mathbb{P}_{-}$, then from the repeated application of \eqref{Eq: Condition (2) of paired operator theorem} and the fact above, we obtain 
		\begin{align}
			A(\z^{\K}) 
			&= A\left(\prod_{j\notin E}z_j^{k_j}\,\prod_{j\in E}z_j^{k_j}\right)\notag \\
			&= \left(\prod_{j\notin E}z_j^{k_j}\right)A\left(\prod_{j\in E}z_j^{k_j}\right)\label{Eq: A(z^k) = (prod not E)(prod E)}.
		\end{align}
		Moreover, if $\bm{m}=(m_1,\dots,m_n)$ be such that $m_j\leq 0$ for all $j=1,\dots,n$. Then for each $j$, we have 
		\[
		A(\overline{z_j}\z^{\bm{m}})=AM_{z_1}^{*l_1}\cdots M_{z_n}^{*l_n}P_{-}(\overline{z_j}), \quad \text{where}\quad (l_j=-m_j,\,\forall j=1,\dots,n).
		\]
		Once again, the repeated application of the identities in \eqref{Eq: A M-z* P- = M-z* A P-} and \eqref{Eq: projection condition for shift co-invariant} reduces the above identity as follows
		\begin{align*}
			A(\overline{z_j}\z^{\bm{m}})
			&= M_{z_1}^{*l_1}\cdots M_{z_n}^{*l_n}AP_{-}(\overline{z_j})\\
			&= \z^{\bm{m}}A(\overline{z_j})\\
			&= \overline{z_j}\z^{\bm{m}}(z_jA(\overline{z_j}))\\
			&= (\overline{z_j}\z^{\bm{m}})\psi.
		\end{align*}
		Consequently, we deduce that
		\[
		A\left(\prod_{j\in E}z_j^{k_j}\right)=\left(\prod_{j\in E}z_j^{k_j}\right)\psi.
		\]
		Hence,   Equation \eqref{Eq: A(z^k) = (prod not E)(prod E)} becomes
		\[
		A(\z^{\K})=\left(\prod_{j\notin E}z_j^{k_j}\right)\left(\prod_{j\in E}z_j^{k_j}\right)\psi=\z^{\K}\psi
		\]
		for all $\K\in\cls_E$ and $\emptyset\neq E\subseteq\{1,\dots,n\}$. Thus,
		\[
		A(\z^{\K})=\z^{\K}\psi, \quad (\K\in\Z^n\setminus\Z^n_{+}).
		\]
		It now immediately follows that 
		\[
		Ap=\psi p, \quad (p\in \mathbb{P}_{-}).
		\]
		This proves our claim (b). Again, we assume that $p\in\mathbb{P}$ is arbitrary, then we can choose a $\K=(k_1,\dots,k_n)\in\Z^n$ such that $\z^{\K}p\in\mathbb{P}_{-}$ so that
		\[
		\|\psi p\|_2=\|\psi \z^{\K}p\|_2=\|A(\z^{\K}p)\|_2\leq \|A\|\|p\|_2
		\]
		for all $p\in\mathbb{P}$. Again, the application of Lemma \ref{Lemma: bounded phi lemma} immediately yields $\psi\in L^{\infty}(\T^n)$ and the claim (c) is proved. Finally, for $f\in H^2(\D^n)^{\perp}$, \eqref{Eq: P, P+, and P- closure} implies that there exists a sequence $(p_m)_{m\geq 1}\subseteq \mathbb{P}_{-}$ such that 
		\[
		p_m\to f \quad \text{in } L^2(\T^n).
		\]
		Thus,
		\begin{align*}
			\|Af-\psi f\|_2 
			& \leq \|Af-Ap_m\|_2+\|Ap_m-\psi f\|_2\\
			&\leq \|A\|\|f-p_m\|_2+\|\psi p_m-\psi f\|_2\\
			&\leq (\|A\|+\|\psi\|_{\infty})\|p_m-f\|_2\\
			&\longrightarrow 0
		\end{align*}
		Hence we obtain
		\[
		Af=\psi f \quad \text{for all }f\in H^2(\D^n)^{\perp},
		\]
		or, 
		\begin{equation}\label{Eq: AP-=M psi P-}
			AP_{-}=M_{\psi}P_{-}. 
		\end{equation}
		Finally, combining \eqref{Eq: AP+=M phi P+} and \eqref{Eq: AP-=M psi P-}, we obtain 
		\[
		A=M_{\phi}P_{+}+M_{\psi}P_{-}=S_{\phi, \psi}
		\]
		for some $\phi,\psi\in L^{\infty}(\T^n)$ and this establishes the implication $(2)\implies (1)$. This completes the proof of the theorem.
	\end{proof}

	We now turn our attention to generalizing the concept of $\theta$-paired operators on $H^2(\D^n)$. Recently, the characterization of $\theta$-paired operators on $H^2(\D)$ was given in \cite{N-DAS-S-DAS-SARKAR}.  Recall that  for a non-constant inner function $\theta \in H^\infty(\D^n)$, $\theta H^2(\D^n)$ is a nonzero closed subspace of $H^2(\D^n)$ which is invariant under $T_{z_i}$ for all $i\in \{1,\ldots, n\}$. Therefore,
	
	\[
	H^2(\D^n)=\theta H^2(\D^n) \oplus K_\theta,
	\]
	where $K_\theta=H^2(\D^n)\ominus \theta H^2(\D^n)$.

	\begin{defn}
		Let $\theta \in H^\infty(\D^n)$ be a non-constant inner function. The $\theta$-paired operator on $H^2(\D^n)$ with symbols $\phi, \psi \in H^\infty(\D^n)$ is defined as
		\[
		S_{\phi,\psi}^\theta=T_\phi P_{\theta H^2(\D^n)} +T_\psi P_{K_\theta}.
		\]
		
	\end{defn}
	
	We now state the characterization result for $\theta$-paired operators on $H^2(\D^n)$. The proof follows along the same lines as in the case of $H^2(\D)$ in \cite{N-DAS-S-DAS-SARKAR}, so we omit it.
	
	\begin{thm}
		Let $\theta\in H^{\infty}(\D^n)$ be a non-constant inner function. A bounded operator $X$ on $H^2(\D^n)$ is a $\theta$-paired operator if and only if the following conditions are satisfied:
		
		\begin{enumerate}
			\item $\theta H^2(\D^n)$ is invariant under $X$.
			\item There exist a non-zero $f_0 \in K_\theta$ and $v \in H^\infty(\D^n)$ such that
			\[
			Xf_0=vf_0
			\]
			and
			\[
			[X,T_{z_i}]=(X-T_v)P_{\theta H^2(\D^n)}T_{z_i}P_{k_\theta}
			\]
			for all $i=1,\ldots, n.$
		\end{enumerate}
		
	\end{thm}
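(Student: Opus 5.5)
The plan is to prove the two implications separately. Throughout, $P_\theta:=P_{\theta H^2(\D^n)}$ and $P_K:=P_{K_\theta}$. The one structural fact used repeatedly is that $\theta H^2(\D^n)$ is invariant under every $T_{z_i}$, so $P_\theta T_{z_i}P_\theta=T_{z_i}P_\theta$. Consequently
\[
[P_\theta,T_{z_i}]=P_\theta T_{z_i}P_K=-[P_K,T_{z_i}] .
\]
Also, since the symbols lie in $H^\infty(\D^n)$, the operators $T_\phi,T_\psi,T_v$ are multiplication operators and commute with all $T_{z_i}$.

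\emph{Necessity.} Let $X=S^\theta_{\phi,\psi}$. Condition (1) is immediate: $P_K$ annihilates $\theta H^2(\D^n)$, and $T_\phi$ maps $\theta H^2(\D^n)$ into itself. For (2), since $\theta$ is non-constant, $K_\theta\neq\{0\}$; pick any nonzero $f_0\in K_\theta$ and set $v=\psi$, so that $Xf_0=\psi f_0$. Because $T_\phi,T_\psi$ commute with $T_{z_i}$, the identity above gives
\[
[X,T_{z_i}]=T_\phi[P_\theta,T_{z_i}]+T_\psi[P_K,T_{z_i}]=(T_\phi-T_\psi)P_\theta T_{z_i}P_K .
\]
Since $(X-T_\psi)P_\theta=(T_\phi-T_\psi)P_\theta$, this is exactly the required commutator formula.

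\emph{Sufficiency.} Put $Y=X-T_v$. As $T_v$ commutes with $T_{z_i}$, the hypothesis becomes $[Y,T_{z_i}]=YP_\theta T_{z_i}P_K$. The argument then splits according to $H^2(\D^n)=\theta H^2(\D^n)\oplus K_\theta$.

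\begin{enumerate}
\item On $\theta H^2(\D^n)$ the right-hand side vanishes, so $YT_{z_i}=T_{z_i}Y$ there. Moreover $Y$ leaves $\theta H^2(\D^n)$ invariant by (1).
\item Transport $Y|_{\theta H^2(\D^n)}$ to $H^2(\D^n)$ via the unitary $g\mapsto\theta g$. This gives a bounded operator commuting with all $T_{z_i}$. By the standard description of the commutant of the shifts on $H^2(\D^n)$, this operator is $T_\eta$ for some $\eta\in H^\infty(\D^n)$.
\item Hence $XP_\theta=T_{v+\eta}P_\theta$, and we take $\phi=v+\eta$.
\item For $f\in K_\theta$, expand $T_{z_i}f=P_\theta z_if+P_Kz_if$ in the commutator identity. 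This yields $Y(P_KT_{z_i}f)=z_iYf$. So $Y|_{K_\theta}$ intertwines the compressed shifts $S_i=P_KT_{z_i}|_{K_\theta}$ with $T_{z_i}$.
\item Since $K_\theta$ is co-invariant, $S^{\alpha}=P_KT_{\z^\alpha}|_{K_\theta}$. By induction, $Y(P_Kpf)=pYf$ for every polynomial $p$ and every $f\in K_\theta$.
\end{enumerate}

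The main obstacle is showing $YP_K=0$. Cyclicity of $f_0$ for the compressed shifts cannot be relied on, so I would instead pass from polynomials to $\theta$ itself. Take Fejér means $p_m$ of $\theta$; these are uniformly bounded by $\|\theta\|_\infty$ and converge to $\theta$ a.e. on $\T^n$. By dominated convergence, $T_{p_m}\to T_\theta$ strongly and $p_mYf\to\theta Yf$ in $L^2$. Taking limits gives $Y(P_K\theta f)=\theta\, Yf$. But $P_K(\theta f)=0$, so $\theta\,Yf=0$, and since $|\theta|=1$ a.e. on $\T^n$ we get $Yf=0$. Thus $XP_K=T_vP_K$, and with $\psi=v$ we conclude $X=T_\phi P_\theta+T_\psi P_K=S^\theta_{\phi,\psi}$.

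The hypothesis $Xf_0=vf_0$ appears only to pin down $v$, and the argument above does not even need $f_0$; I would double-check this against the one-variable proof in \cite{N-DAS-S-DAS-SARKAR}.
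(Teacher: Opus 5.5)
Your proof is correct. The paper gives no argument for this theorem; it only asserts that the one-variable proof of Das, Das and Sarkar carries over. So there is no route in the paper to compare against, and yours is a complete, self-contained proof.

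It is also a genuinely $n$-variable argument. On $\theta H^2(\D^n)$ you use the commutant of the shift tuple. On $K_\theta$ you show that $Y=X-T_v$ intertwines the compressed shifts with $(T_{z_1},\dots,T_{z_n})$, so that $Y(P_{K_\theta}pf)=pYf$ for analytic polynomials $p$. Then $P_{K_\theta}(\theta f)=0$ forces $Y|_{K_\theta}=0$; in other words, $\theta$ annihilates the compressed tuple.

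This point matters, because some familiar one-variable facts fail for $n\geq 2$. Take $\theta=z_2$ on $\D^2$:
\begin{itemize}
\item $K_\theta$ consists of the functions of $z_1$ alone.
\item The compression of $T_{z_1}$ to $K_\theta$ is an isometry, so its powers do not tend strongly to zero.
\item $P_{\theta H^2(\D^2)}T_{z_2}P_{K_\theta}=T_{z_2}P_{K_\theta}$ has infinite rank, not rank one.
\end{itemize}
Your argument does not depend on any of these.

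You are right that the clause $Xf_0=vf_0$ is redundant, and more so than you say. It is not even needed to pin down $v$. Your Steps 4--6 use neither $f_0$ nor condition (1), and they yield $XP_{K_\theta}=T_vP_{K_\theta}$. Since $K_\theta\neq\{0\}$, this already determines $v$ uniquely from $X$. It also shows that every nonzero $f_0\in K_\theta$ satisfies $Xf_0=vf_0$.

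One small simplification: you do not need almost-everywhere convergence of the Fej\'er means, which in several variables is a harder theorem. It suffices that $p_m\to\theta$ in $L^2(\T^n)$ with $\|p_m\|_\infty\leq 1$. This already gives $p_mh\to\theta h$ in $L^2$ for every $h\in L^2(\T^n)$: check it for bounded $h$, then use density.
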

	
	\section{Essentially Toeplitz + Essentially Hankel operators}\label{Sec:ET+EH}
	
	In this section, we will be giving an intrinsic characterization for an operator to be written as a sum of an essentially Toeplitz and an essentially Hankel operator. Before stating the main result, we give some lemmas, which we will use in our proof. The following result is given in \cite[Chapter 2]{Nadkami}:
	
	\begin{lem}(Von-Neumann ergodic theorem, \cite{Nadkami})\label{lem:ergodic}
		Let $U: \clh \rightarrow \clh$ be a unitary operator on a separable Hilbert space $\clh$. Then for every $x \in \clh$,
		\[
		\lim\limits_{n\rightarrow \infty}\frac{1}{n}\sum\limits_{k=0}^{n-1} U^{k}x=P_{\clm_U}x,
		\]
		where $P_{\clm_U}$ is the orthogonal projection of $\clh$ onto the closed subspace $\clm_U=\{v \in \clh: Uv=v\}$.
		
	\end{lem}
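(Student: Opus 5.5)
Set $P=P_{\clm_U}$ and $A_N=\frac1N\sum_{k=0}^{N-1}U^k$. The plan is to split $\clh=\clm_U\oplus\clm_U^{\perp}$ and show that $A_N$ acts as the identity on the first summand and tends to $0$ strongly on the second; the sequence will be indexed by $N$ to avoid a clash with the dimension $n$.

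\textbf{The fixed subspace.} For $x\in\clm_U$ we have $U^kx=x$ for every $k$, so $A_Nx=x$ for all $N$.

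\textbf{Identifying $\clm_U^{\perp}$.} Since $U$ is unitary, $Ux=x$ iff $U^*x=x$. This holds because $\|U^*x-x\|^2=2\|x\|^2-2\mathrm{Re}\langle U^*x,x\rangle=2\|x\|^2-2\mathrm{Re}\langle x,Ux\rangle$, and the same expression equals $\|Ux-x\|^2$. Hence
\[
\clm_U=\ker(I-U)=\ker(I-U^*)=\operatorname{ran}(I-U)^{\perp},
\]
which gives $\clm_U^{\perp}=\overline{\operatorname{ran}(I-U)}$.

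\textbf{Decay on a dense set.} For $x=y-Uy$ the sum telescopes:
\[
A_Nx=\frac1N\,(y-U^Ny),\qquad \|A_Nx\|\le \frac{2\|y\|}{N}\to0.
\]
The uniform bound $\|A_N\|\le1$ holds because each $A_N$ is an average of unitaries. An $\varepsilon/3$ argument then extends $A_Nx\to0$ from $\operatorname{ran}(I-U)$ to its closure $\clm_U^{\perp}$.

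\textbf{Conclusion.} Write a general $x$ as $x=Px+(I-P)x$ and apply the two cases to get $A_Nx\to Px$.

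The only step that needs care is the identification $\ker(I-U)^{\perp}=\overline{\operatorname{ran}(I-U)}$. It depends on $\ker(I-U)=\ker(I-U^*)$, which is where unitarity (or at least contractivity) of $U$ is used. Separability of $\clh$ is not actually needed.
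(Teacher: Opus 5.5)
Your proof is correct and complete. The splitting $\clh=\ker(I-U)\oplus\overline{\operatorname{ran}(I-U)}$, the identity $\ker(I-U)=\ker(I-U^*)$, the telescoping estimate on $\operatorname{ran}(I-U)$, and the extension to its closure via $\|A_N\|\le 1$ are all handled properly.

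The paper itself gives no proof of this lemma; it only quotes it from Nadkarni's book, so there is no in-paper argument to compare against. Your argument is the standard splitting proof. Unlike the spectral-theorem route, which integrates $\frac1N\sum_{k<N}\lambda^k\to\mathbf{1}_{\{1\}}(\lambda)$ against the spectral measure of $U$, it uses no spectral theory and works verbatim for contractions.

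Your closing remark that separability is superfluous matters for the paper's application. The lemma is used for $U=\wh{S}^4$ on $\wh{H^2_\cle(\D)}$, a completion built from weakly null sequences via a Banach limit. That space is not separable: an uncountable almost-disjoint family of subsequences of an orthonormal basis yields an uncountable orthonormal set. So the separability hypothesis as stated in the paper is not met there, and your proof covers this case.

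Two further points from your proof fill small gaps in the paper's use of the lemma. Your identity $\ker(I-U)=\ker(I-U^*)$ is exactly the unproved assertion $\clm_{\wh{S}^4}=\clm_{\wh{S}^{*4}}$ in the final proposition. Your telescoping bound also shows that the paper's averages $\frac1n\sum_{k=1}^{n}$, used in place of $\frac1n\sum_{k=0}^{n-1}$, are harmless, since the two differ by $\frac1n(U^nx-x)$.
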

	
	The following result is given by Olsen and Pederson for any $\sigma$-unital C$^*$-algebra $\cla$ and its \emph{corona C$^*$-algebra} $\clc(\cla)$ (see \cite{Olsen} for details):
	
	\begin{prop}[\cite{Olsen}]\label{prop:corona}
		Let $\cla$ be a $\sigma$-unital C$^*$-algebra and $a,b$ be elements in $\clc(\cla)$ with $a^*a \leq b^*b$. If $\cld$ is a separable subset of $\clc(\cla)$ whose every element commutes with $a,b^*$, and $b^*b$, then $a=cb$ for some $c \in \clc(\cla)$ such that $c$ commutes with every element of $\cld$.
	\end{prop}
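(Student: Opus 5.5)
The plan is to lift everything to the multiplier algebra $M(\cla)$, where $\clc(\cla)=M(\cla)/\cla$, and to build $c$ by gluing approximate solutions along a quasicentral approximate unit. This is the technique behind Kasparov's technical theorem. I would not try to prove the statement inside $\clc(\cla)$ directly: there the inequality $a^*a\le b^*b$ gives no limit for the natural approximate quotients.

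\emph{Step 1 (approximate factorizations).} For $\varepsilon>0$ set $c_\varepsilon=a(\varepsilon+b^*b)^{-1}b^*\in\clc(\cla)$.
\begin{itemize}
\item From $a^*a\le b^*b$ one gets $\|a(\varepsilon+b^*b)^{-1/2}\|\le 1$ and $\|(\varepsilon+b^*b)^{-1/2}b^*\|\le 1$, so $\|c_\varepsilon\|\le1$.
\item We have $c_\varepsilon b-a=-\varepsilon\, a(\varepsilon+b^*b)^{-1}$. Its square satisfies $\|c_\varepsilon b-a\|^2\le\|\varepsilon^2(\varepsilon+b^*b)^{-2}b^*b\|\le\varepsilon/4$, so $c_\varepsilon b\to a$ in norm.
\item Since every $d\in\cld$ commutes with $a$, $b^*$ and $b^*b$, it commutes with each $c_\varepsilon$.
\end{itemize}
The obstruction is that $(c_\varepsilon)$ need not converge in norm. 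So the goal is to assemble $c$ from these pieces while keeping both properties, $cb=a$ and commutation with $\cld$, exactly in the corona.

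\emph{Step 2 (gluing).} Choose lifts $A,B\in M(\cla)$ of $a,b$, lifts $C_k$ of $c_{\varepsilon_k}$ with $\varepsilon_k\downarrow0$ and $\|C_k\|\le1$, and a countable dense set $\{D_m\}$ of lifts of elements of $\cld$. Separability of $\cld$ and $\sigma$-unitality of $\cla$ give a countable increasing approximate unit $(e_k)$ of $\cla$ that is quasicentral for the separable family $\{A,B,B^*B,C_k,D_m\}$.

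Set $f_0=e_0$ and $f_k=(e_k-e_{k-1})$. After passing to a subsequence of $(e_k)$ and reindexing the $\varepsilon_k$, arrange that on the $k$-th block the following quantities are below $2^{-k}$:
\begin{itemize}
\item the commutators $[f_k^{1/2},X]$ for $X$ in the first $k$ members of the family;
\item $\|f_k^{1/2}(C_kB-A)\|$;
\item $\|f_k^{1/2}[C_k,D_m]\|$ for $m\le k$.
\end{itemize}
The last two are possible because $C_kB-A$ and $[C_k,D_m]$ lie in $\cla$ up to terms of norm $O(\sqrt{\varepsilon_k})$, and $f_k$ eventually kills any fixed element of $\cla$.

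Then define
\[
C=\sum_k f_k^{1/2}C_kf_k^{1/2},
\]
which converges strictly, with $\|C\|\le1$, because $\sum f_k=1$ strictly.

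\emph{Step 3 (verification).} I would show that $CB-A=\sum_k f_k^{1/2}(C_kB-A)f_k^{1/2}+(\text{commutator terms})$. Each summand is in $\cla$ up to summable norm errors, so the whole expression lies in $\cla$ and $cb=a$ in $\clc(\cla)$. Similarly, $[C,D_m]\in\cla$ for every $m$, and by density $c$ commutes with all of $\cld$.

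The main obstacle is Step 2: the bookkeeping that chooses the subsequence of $(e_k)$ and the $\varepsilon_k$ simultaneously. The $O(\sqrt{\varepsilon_k})$ defects, which are genuinely not in $\cla$, must be absorbed, and the tail sums must be norm-convergent rather than merely strictly convergent. I would handle this by first fixing $\varepsilon_k$ with $\sqrt{\varepsilon_k}<2^{-k}$, and only then choosing $e_k$ far enough out to control the finitely many commutators relevant at stage $k$.
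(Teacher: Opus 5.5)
The paper does not prove this proposition. It quotes it from Olsen--Pedersen \cite{Olsen} and uses it as a black box in Claim 3. There is therefore nothing internal to compare against, and your proposal has to stand on its own. It does: it is a correct, self-contained proof in the style of Kasparov's technical theorem.

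Step 1 is exactly right. The element $c_\varepsilon=a(\varepsilon+b^*b)^{-1}b^*$ commutes with $\cld$ \emph{exactly}, which is precisely why the hypotheses concern $a$, $b^*$ and $b^*b$. Moreover $\|c_\varepsilon\|\le 1$ and $\|c_\varepsilon b-a\|\le\sqrt{\varepsilon}/2$, so the corona structure is needed only for gluing. Compared with citing \cite{Olsen}, your route pays for the approximate-unit bookkeeping. In return it shows exactly where each hypothesis enters: $\sigma$-unitality supplies a sequential approximate unit, and separability of $\cld$ means only countably many commutator conditions must be controlled.

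A few points should be tightened when you write it out.

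\emph{The commutator defects.} $[C_k,D_m]$ lies in $\cla$ exactly, not merely up to $O(\sqrt{\varepsilon_k})$, because $c_{\varepsilon_k}$ commutes with $d_m$ in $\clc(\cla)$. Only $C_kB-A$ carries a defect. Write $C_kB-A=x_k+y_k$ with $x_k\in\cla$ and $\|y_k\|<2^{-k}$. Then, once all $C_k$ are fixed, choose $e_{k-1}$ using $\|f_k^{1/2}x\|^2\le\|x\|\,\|(1-e_{k-1})x\|$.

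\emph{Why $CB-A\in\cla$.} In Step 3 the reason is not that the summands are ``in $\cla$ up to summable errors''. A norm-summable series of errors lying outside $\cla$ need not sum into $\cla$. The actual reason is that each summand $f_k^{1/2}(C_kB-A)f_k^{1/2}$, $f_k^{1/2}C_k[f_k^{1/2},B]$, $f_k^{1/2}[f_k^{1/2},A]$ lies in $\cla$ exactly, since $f_k\in\cla$. Each also has norm $O(2^{-k})$, so the strictly convergent series actually converges in norm inside $\cla$. The same argument handles $[C,D_m]$, via the identity $[f_k^{1/2}C_kf_k^{1/2},D]=f_k^{1/2}C_k[f_k^{1/2},D]+f_k^{1/2}[C_k,D]f_k^{1/2}+[f_k^{1/2},D]C_kf_k^{1/2}$.

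\emph{The approximate unit.} You need an \emph{increasing} sequential quasicentral approximate unit, so that $f_k\ge 0$. One exists for $\sigma$-unital $\cla$ and any separable subset of $M(\cla)$: take convex combinations over consecutive blocks of $h^{1/n}$, where $h$ is strictly positive. You also need the standard estimate passing from $\|[f_k,X]\|$ small to $\|[f_k^{1/2},X]\|$ small, via uniform polynomial approximation of $\sqrt{t}$ on $[0,1]$. Quasicentrality with respect to $C_k$ and $B^*B$ is never used and can be dropped.
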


    The following result related to the theory of C$^*$-algebras is well-known and can be found in the standard references (see \cite{ConwayOT, Davidson}):

    \begin{prop}\label{prop:injective}
    Let $\cla$ and $\clb$ be two C$^*$-algebras and $\phi:\cla \rightarrow \clb$ be a $*$-homomorphism with trivial kernel. Then, for self-adjoint $a \in \cla$, $a\geq 0$ if and only if $\phi(a)\geq 0$.

    \end{prop}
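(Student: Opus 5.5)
The plan is to use the continuous functional calculus together with the decomposition of a self-adjoint element into positive and negative parts. This way injectivity is only needed once, at the very end, and spectral permanence is never invoked.

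The forward direction is immediate. If $a\geq 0$, then $a=b^*b$ for some $b\in\cla$ (for instance $b=a^{1/2}$). Hence
\[
\phi(a)=\phi(b)^*\phi(b)\geq 0.
\]

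For the converse, let $a=a^*$ with $\phi(a)\geq 0$, and write $a=a_+-a_-$ with $a_\pm=f_\pm(a)$. Here $f_+(t)=\max(t,0)$ and $f_-(t)=\max(-t,0)$ are continuous on $\mathbb{R}$ and vanish at $0$, so no unitization is needed. The key step is the identity
\[
\phi(f(a))=f(\phi(a))
\]
for every continuous $f$ on $K=[-\|a\|,\|a\|]$ with $f(0)=0$.

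\begin{enumerate}
\item For a polynomial $p$ without constant term, $\phi(p(a))=p(\phi(a))$ holds because $\phi$ is a $*$-homomorphism.
\item Any $*$-homomorphism is contractive. So $\|\phi(a)\|\leq\|a\|$, which gives $\sigma(a)\cup\sigma(\phi(a))\subseteq K$.
\item By Weierstrass, choose polynomials $p_m$ with $p_m(0)=0$ and $p_m\to f$ uniformly on $K$.
\item Then $p_m(a)\to f(a)$ and $p_m(\phi(a))\to f(\phi(a))$ in norm, and $\phi(p_m(a))\to\phi(f(a))$ by continuity of $\phi$. This gives the identity.
\end{enumerate}

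Applying the identity with $f=f_-$ gives $\phi(a_-)=\phi(a)_-$. Since $\phi(a)\geq 0$, its spectrum lies in $[0,\infty)$, so $f_-$ vanishes on $\sigma(\phi(a))$ and $\phi(a)_-=0$. Hence $\phi(a_-)=0$. Injectivity of $\phi$ then forces $a_-=0$, so $a=a_+\geq 0$.

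The only delicate point is the compatibility of $\phi$ with the functional calculus. I expect this to be the main obstacle, and specifically the need for a single compact set $K$ on which uniform approximation controls both $a$ and $\phi(a)$ simultaneously. Contractivity of $\phi$ settles it, and restricting to functions with $f(0)=0$ avoids any issue with units or unitizations. An alternative route is to note that an injective $*$-homomorphism is isometric, so $\phi:\cla\to\phi(\cla)$ is a $*$-isomorphism onto a C$^*$-subalgebra, and then invoke spectral permanence. The functional calculus argument above is more self-contained, so I would use it.
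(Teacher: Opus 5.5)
Your proof is correct, but it takes a different route from the paper's.

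The paper's argument is two lines. Since $\phi(\cla)$ is itself a C$^*$-algebra, the positive element $\phi(a)$ can be written as $\phi(b)^*\phi(b)=\phi(b^*b)$ for some $b\in\cla$, and injectivity then gives $a=b^*b\geq 0$. That argument relies without comment on two nontrivial facts. The first is that the range of a $*$-homomorphism is norm closed. The second is that an element of $\phi(\cla)$ that is positive in $\clb$ is also positive in $\phi(\cla)$; this holds, e.g., because its square root is a norm limit of polynomials in $\phi(a)$ without constant term.

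Your argument never needs the closed-range fact. You use only contractivity of $\phi$ and Weierstrass approximation to get $\phi(f(a))=f(\phi(a))$ for continuous $f$ with $f(0)=0$. Injectivity is then applied to the single element $a_-$, rather than to a factorization $a=b^*b$ pulled back from the image.

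So the paper's version is shorter if standard C$^*$-theory is taken for granted. Yours is more elementary and self-contained. It also proves slightly more: for an arbitrary $*$-homomorphism, $\phi(a)\geq 0$ if and only if $a_-\in\ker\phi$.

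The alternative you mention at the end, that an injective $*$-homomorphism is isometric and spectral permanence applies, is essentially the machinery behind the paper's remark that $\phi(\cla)$ is a C$^*$-algebra.
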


    \begin{proof}
    The necessary part is easy to verify. For the converse, let $\phi(a) \geq 0$. Note that $\phi(\cla)$ is also a C$^*$-algebra, there exists $b \in \cla$ such that $\phi(a)=\phi(b)^*\phi(b)=\phi(b^*b)$. Since $\phi$ has trivial kernel, we get $a=b^*b$. This finishes the proof.

    \end{proof}

     For the rest of the section, we will assume $\cle$ to be a finite-dimensional complex Hilbert space.  We are now ready to state the main result of this section.
	
	\begin{thm}\label{thm:maintheorem}
		Let $A \in \clb(H^2_{\cle}(\D))$. Then the following are equivalent:
		\begin{enumerate}
			\item $A \in \cle\clt+\cle\clh$.
			\item $T_z^*AT_z-A$ is an essentially Hankel operator.
			\item $T_z^*A-AT_z$ is an essentially Toeplitz operator.
			\item $AT_z+T_z^{*2}AT_z-T_z^*A-T_z^*AT_z^2$ is compact.
		\end{enumerate}
	\end{thm}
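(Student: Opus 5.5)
The plan is to prove the easy implications directly, then do the hard implication $(3)\Rightarrow(1)$ entirely in the Calkin algebra $\clc=\clb(H^2_\cle(\D))/\clk$, with $\pi$ the quotient map. Since $\cle$ is finite-dimensional, $s=\pi(T_z)$ is a unitary in $\clc$. In this language $A\in\cle\clt$ means $\pi(A)$ commutes with $s$, and $A\in\cle\clh$ means $s^*\pi(A)=\pi(A)s$.

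\textbf{Easy implications.} For $(1)\Rightarrow(4)$, take $A=T+H$ with $T\in\cle\clt$ and $H\in\cle\clh$. The computation is the same as in the Toeplitz + Hankel theorem of Section 3, with every equality replaced by equality modulo $\clk$, using $T_z^*TT_z-T\in\clk$ and $T_z^*H-HT_z\in\clk$.

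For $(2)\Leftrightarrow(4)\Leftrightarrow(3)$, rewrite the operator in (4) in two ways:
\[
AT_z+T_z^{*2}AT_z-T_z^*A-T_z^*AT_z^2=T_z^*(T_z^*AT_z-A)-(T_z^*AT_z-A)T_z,
\]
and also as $-\bigl(T_z^*(T_z^*A-AT_z)T_z-(T_z^*A-AT_z)\bigr)$. The first form is compact exactly when $T_z^*AT_z-A$ is essentially Hankel, and the second exactly when $T_z^*A-AT_z$ is essentially Toeplitz.

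\textbf{Reduction of $(3)\Rightarrow(1)$.} Put $a=\pi(A)$ and $x=\pi(T_z^*A-AT_z)=s^*a-as$. By (3), $x$ commutes with $s$, and by Fuglede's theorem also with $s^*$. It suffices to find $c\in\clc$ commuting with $s$ such that
\[
x=c(s^*-s).
\]
Indeed, lift $c$ to some $B\in\clb(H^2_\cle(\D))$. Then $B\in\cle\clt$, and $s^*\pi(B)-\pi(B)s=c(s^*-s)=x$. Hence $s^*(a-\pi(B))=(a-\pi(B))s$, so $A-B\in\cle\clh$ and $A=B+(A-B)\in\cle\clt+\cle\clh$.

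To get $c$, I would apply Proposition \ref{prop:corona} with $\cla=\clk$, so that $\clc(\cla)=\clc$, taking $b=s^*-s$ and $\cld=\{s,s^*\}$. The commutation hypotheses of the proposition hold because $x$, $b^*$ and $b^*b$ all commute with $s$. What remains is the domination $x^*x\le C\,(s^*-s)^*(s^*-s)$, or $x^*x\le b^*b$ after rescaling $b$.

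\textbf{Main obstacle: the domination inequality.} First, imitating the scalar iteration from the proof of the Toeplitz + Hankel theorem, induction gives
\[
s^{*m}a-as^m=x\,u_m(s),\qquad u_m(s)=\sum_{k=0}^{m-1}s^{*(m-1-k)}s^{k}.
\]
Here $u_m$ is the polynomial in $s,s^*$ corresponding to $\frac{z^m-\bar z^m}{z-\bar z}$, and we get $\|x\,u_m(s)\|\le 2\|a\|$ for all $m$. Since $x^*x$ commutes with $s$, this gives
\[
u_m(s)^*\,x^*x\,u_m(s)\le 4\|a\|^2.
\]

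Next, represent the commutative C$^*$-algebra generated by $s$ and $x^*x$ faithfully as $C(\Omega)$ by Gelfand theory. By Proposition \ref{prop:injective}, inequalities may then be checked pointwise. At a point where $s=e^{i\theta}$ we obtain
\[
\Bigl|\tfrac{\sin m\theta}{\sin\theta}\Bigr|^2\,x^*x\le 4\|a\|^2 .
\]
Averaging over $m$, which is where the ergodic theorem (Lemma \ref{lem:ergodic}) enters, or just using $\frac1N\sum_{m\le N}\sin^2 m\theta\to\tfrac12$ when $\theta\notin\pi\Z$, yields $x^*x\le 8\|a\|^2\sin^2\theta$. At $\theta\in\pi\Z$ the bound $m^2\,x^*x\le 4\|a\|^2$ for all $m$ forces $x^*x=0$. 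Since $(s^*-s)^*(s^*-s)$ corresponds to $4\sin^2\theta$, this gives
\[
x^*x\le 2\|a\|^2\,(s^*-s)^*(s^*-s),
\]
and Proposition \ref{prop:corona} then supplies the required $c$. I expect this pointwise and averaging step, in particular passing from the uniform bounds in $m$ to a genuine operator inequality in the corona, to be the delicate part of the proof.
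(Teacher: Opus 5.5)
Your argument is correct and follows the paper's skeleton for $(3)\Rightarrow(1)$. Both proofs use the iterated identity $s^{*m}a-as^m=x\,u_m(s)$ with the uniform bound $2\|a\|$. Both dominate $x^*x$ by a multiple of $(s-s^*)^*(s-s^*)$ and then apply the Olsen--Pedersen factorization with a factor commuting with $s$. Both finish by subtracting a lift of that factor to leave an essentially Hankel remainder; the paper's factor $T'$ in $T=T'R+K_1$ is your $-B$.

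The genuine difference is how you prove the domination inequality.

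\emph{The paper's route.} It works in the limit-operator representation $T\mapsto\wh{T}$ on $\wh{H^2_\cle(\D)}$ and proves $\cln(\wh R)\subseteq\cln(\wh T)$ separately. It then uses only the even iterates together with the von Neumann ergodic theorem for $\wh S^4$. This forces a split into the fixed space $\clm$ of $\wh S^4$ and $\clm^\perp$, with an extra argument on $\clr(\wh R)\cap\clm$. Finally it pulls the inequality back to the Calkin algebra via injectivity of $T+\clk\mapsto\wh T$.

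\emph{Your route.} You observe that $C^*(1,s,x^*x)$ is commutative and check the inequality pointwise on its Gelfand spectrum. There it reduces to $\frac1N\sum_{m\le N}\sin^2 m\theta\to\frac12$ for $\theta\notin\pi\Z$, and to $m^2t\le 4\|a\|^2$ at $\theta\in\pi\Z$. The latter case is exactly the paper's Claim 1. Because you use all $m$, the points $\theta=\pm\pi/2$ (the paper's $\clr(\wh R)\cap\clm$ case) are not exceptional for you.

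Your approach avoids Banach limits and the ergodic theorem entirely and gives the clean constant $2\|a\|^2$. The step you flag as delicate is fully rigorous as written. Positivity in a C$^*$-subalgebra agrees with positivity in $\clc$, and in $C(\Omega)$ it is pointwise, so no interchange of limits with operator inequalities occurs.

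Two cosmetic points:
\begin{enumerate}
\item $T_z^*(T_z^*A-AT_z)T_z-(T_z^*A-AT_z)$ equals the operator in (4) exactly, so your extra minus sign is a slip. It is harmless for compactness.
\item Fuglede's theorem is unnecessary, since $s^*=s^{-1}$.
\end{enumerate}
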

	
	We will prove the theorem in two parts. First we establish the equivalence of statements (2), (3), and (4) and later we prove $(1)\iff (3)$. The former part follows from the definitions only but we add the proof for the sake of completeness. Also, we will denote the unilateral shift $T_z$ by $S$ for the simplicity of notation.
	
	\begin{prop}
		Let $A \in \clb(H^2_\cle(\D))$. Then the following are equivalent:
		\begin{enumerate}[label=(\roman*)]
			\item $S^*AS-A$ is an essentially Hankel operator.
			\item $S^*A-AS$ is an essentially Toeplitz operator.
			\item $AS+S^{*2}AS-S^*A-S^*AS^2$ is compact.
		\end{enumerate}
	\end{prop}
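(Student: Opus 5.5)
This is a direct unwinding of the definitions; the only input is the defining relation for $\cle\clh$. Set
\[
X=S^*AS-A,\qquad Y=S^*A-AS,\qquad Z=AS+S^{*2}AS-S^*A-S^*AS^2 .
\]
By definition, $X\in\cle\clh$ means $S^*X-XS\in\clk$, and $Y\in\cle\clt$ means $S^*YS-Y\in\clk$. So it suffices to show that both of these differences equal $Z$, up to sign.

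First, expand $S^*X-XS$:
\[
S^*X-XS = S^{*2}AS - S^*A - S^*AS^2 + AS = Z,
\]
which gives (i)$\iff$(iii) immediately.

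Second, expand $S^*YS-Y$:
\[
S^*YS-Y = S^{*2}AS - S^*AS^2 - S^*A + AS = Z .
\]
This is again exactly $Z$, so (ii)$\iff$(iii). This is the same rewriting the paper used in the polydisc Toeplitz + Hankel theorem, with the exact identities replaced by congruence modulo $\clk$.

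There is no real obstacle here. The only point to watch is that the definitions of $\cle\clt$ and $\cle\clh$ used are the original ones, $S^*TS-T\in\clk$ and $S^*T-TS\in\clk$. With these, the identities hold for every $A$, and finite-dimensionality of $\cle$ is not needed. The alternative formulations involving essential unitarity of $S$ are never invoked.
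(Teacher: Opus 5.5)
Your proof is correct and matches the paper's argument: both $S^*(S^*AS-A)-(S^*AS-A)S$ and $S^*(S^*A-AS)S-(S^*A-AS)$ expand identically to $AS+S^{*2}AS-S^*A-S^*AS^2$, so the three compactness conditions coincide. Your remark that finite-dimensionality of $\cle$ is not used here is also accurate.
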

	
	\begin{proof}
		Note that $S^*AS-S \in \cle\clh$ if and only if there exists $K \in \clk$ such that
		\begin{align*}
			&S^*(S^*AS-A)-(S^*AS-A)S=K\\
			\iff & AS+S^{*2}AS-S^*A-S^*AS^2=K\\
            \iff & S^*(S^*A-AS)S - (S^*A-AS) = K\\
            \iff & S^*A-AS \in \cle\clt
		\end{align*}
        This establishes the equivalence of (i), (ii), and (iii).
	\end{proof}
	
	For the implication $(1)\iff (3)$ of Theorem \ref{thm:maintheorem}, we need some prerequisites (cf. \cite{Mustafayev}) :
	
	For a Hilbert space $\clh$, let $\clh_0$ be the space of all weakly null sequences $\{x_n\}$ in $\clh$. Define a semi-inner product $p$ in $\clh_0$ by
	\[
	   p(\{x_n\},\{y_n\}) = \text{l.i.m.}_n\left\langle x_n,y_n\right\rangle, \quad (\{x_n\},\{y_n\}\in\clh_0),
	\]
	where l.i.m. is a fixed Banach limit (see \cite{Conway} for details). For
	\[
	N=\left\{\{x_n\} \in \clh_0 : p(\{x_n\},\{x_n\})=0\right\},
	\]
    we equip the quotient $\clh_0/N$ with the following inner product, defined by:
	\[
        \left\langle \{x_n\}+N,\{y_n\}+N\right\rangle=\text{l.i.m.}_n\left\langle x_n,y_n\right\rangle.
	\]
	\noindent
	Let $\widehat{\clh}$ be the completion of $\clh_0/N$ with respect to the above inner product. For $T \in \clb(\clh)$, define an operator $\widehat{T}$ on $\widehat{\clh}$ by
	\[
	   \widehat{T}:\{x_n\}+N\mapsto\{T x_n\}+N.
	\]
	Moreover,
	\[
	   \left\|\widehat{T}(\{x_n\}+N)\right\|^2=\text{l.i.m.}_n \|Tx_n\|^2.
	\]
	The operator $\widehat{T}$ is called the \emph{limit operator} associated with $T$.

	\begin{lem}[\cite{Mustafayev}]\label{Lem:Properties}
		If $\widehat{T}$ is the limit operator associated with $T$, then
		\begin{enumerate}
			\item $\|\widehat{T}\|\leq \|T\|.$
			\item The map $T \mapsto \widehat{T}$ is a $*$-homomorphism.
			\item T is compact if and only if $\widehat{T}=0$.
			\item $T$ is essentially isometric, i.e., $I-T^*T$ is compact if and only if $\widehat{T}$ is an isometry.
		\end{enumerate}
	\end{lem}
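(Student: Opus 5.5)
The plan is to check each property directly from the construction of $\widehat{\clh}$, using only three facts about the Banach limit $\text{l.i.m.}$: it is linear, it is positive, and it satisfies $\liminf_n a_n\le \text{l.i.m.}_n a_n\le \limsup_n a_n$ for bounded real sequences.

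First I will check that $\widehat{T}$ is well defined. If $\{x_n\}$ is weakly null, then so is $\{Tx_n\}$, because $\langle Tx_n,y\rangle=\langle x_n,T^*y\rangle\to 0$. If $\{x_n\}\in N$, then $\text{l.i.m.}_n\|Tx_n\|^2\le \|T\|^2\,\text{l.i.m.}_n\|x_n\|^2=0$ by positivity, so $T$ maps $N$ into $N$. The same inequality gives, for every class in $\clh_0/N$,
\[
\|\widehat{T}(\{x_n\}+N)\|^2\le \|T\|^2\,\|\{x_n\}+N\|^2 .
\]
Hence $\widehat{T}$ is bounded on the dense subspace $\clh_0/N$, it extends uniquely to $\widehat{\clh}$, and property (1) follows.

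For (2), linearity and the identity $\widehat{ST}=\widehat{S}\,\widehat{T}$ are immediate on $\clh_0/N$ and pass to the completion by continuity. For the adjoint, I will use
\[
\langle \widehat{T}(\{x_n\}+N),\{y_n\}+N\rangle=\text{l.i.m.}_n\langle Tx_n,y_n\rangle=\text{l.i.m.}_n\langle x_n,T^*y_n\rangle=\langle \{x_n\}+N,\widehat{T^*}(\{y_n\}+N)\rangle .
\]
By density this gives $\widehat{T}^*=\widehat{T^*}$. Note also that $\widehat{I}$ is the identity on $\widehat{\clh}$.

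For (3), if $T$ is compact then it maps weakly null sequences to norm null sequences. So $\text{l.i.m.}_n\|Tx_n\|^2=0$, which means $\widehat{T}$ vanishes on the dense subspace and hence $\widehat{T}=0$.

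The converse in (3) is the step I expect to need the most care, and it is handled as follows. Suppose $T$ is not compact. Then there is an orthonormal sequence $\{e_n\}$ with $\|Te_n\|\not\to 0$; otherwise $T$ would be compact, since compactness is equivalent to $\|Te_n\|\to0$ for every orthonormal sequence. Passing to a subsequence, I may assume $\|Te_n\|\ge\varepsilon>0$ for all $n$. An orthonormal sequence is weakly null, so $\{e_n\}\in\clh_0$, and
\[
\|\widehat{T}(\{e_n\}+N)\|^2=\text{l.i.m.}_n\|Te_n\|^2\ge \liminf_n\|Te_n\|^2\ge\varepsilon^2>0 .
\]
Thus $\widehat{T}\neq 0$, which proves the converse.

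Finally, (4) follows by combining (2) and (3). Since $\widehat{I}=I$, we get $\widehat{I-T^*T}=I-\widehat{T}^*\widehat{T}$. By (3), $I-T^*T$ is compact if and only if $\widehat{T}^*\widehat{T}=I$, that is, if and only if $\widehat{T}$ is an isometry.
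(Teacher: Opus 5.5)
Your proof is correct. The paper gives no proof of this lemma to compare with: it quotes the result from Mustafayev. Your direct verification from the properties of the Banach limit (linearity, positivity, and $\liminf\le\text{l.i.m.}\le\limsup$) is the natural way to establish it.

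The individual steps all hold:
\begin{enumerate}
\item Positivity applied to $\|T\|^2\|x_n\|^2-\|Tx_n\|^2\ge 0$ gives both $T(N)\subseteq N$ and the norm bound in (1).
\item The $*$-homomorphism identities are checked on the dense subspace $\clh_0/N$ and pass to $\widehat{\clh}$ by continuity.
\item Part (4) follows from (2), (3) and $\widehat{I}=I$, exactly as you say.
\end{enumerate}

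The only non-formal input is the converse in (3). There you use the criterion that $T$ is compact if and only if $\|Te_n\|\to 0$ for every orthonormal sequence. This is a legitimate standard fact: if $T$ is not compact, some spectral projection $\chi_{[\varepsilon,\infty)}(|T|)$ has infinite rank, and any orthonormal sequence in its range satisfies $\|Te_n\|\ge\varepsilon$. An equally standard alternative uses reflexivity. After passing to subsequences, a non-compact $T$ admits a bounded sequence $u_n$ converging weakly to some $u$ with $\|T(u_n-u)\|\ge\varepsilon$, and $\{u_n-u\}$ is the required weakly null sequence. Either route works.

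Your argument also gives a little more than (1). Since $\widehat{K}=0$ for compact $K$, you get $\|\widehat{T}\|\le\inf\{\|T+K\|: K \text{ compact}\}$. So $T\mapsto\widehat{T}$ factors through the Calkin algebra. This is what the paper implicitly needs when it defines $\Phi(T+\clk)=\widehat{T}$ in Claim 3.
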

	
	For the rest of the section, we will denote, for $X,Y \in \clb(\clh)$, 
	\[
	\cli(X)=\{A \in \clb(\clh): AX=XA\}
	\quad \text{and} \quad
	\cli(X,Y)=\{A \in \clb(\clh): XA=AY\}.
	\]
	
	The following lemma is easy to verify. 
	
	\begin{lem}\label{lem:ET}
		For $T, H \in \clb(H^2_\cle(\D))$, $T$ and $H$ are essentially Toeplitz and essentially Hankel operators, respectively, if and only if  $\wh{T} \in \cli(\widehat{S})$ and $\wh{H} \in \cli(\wh{S}^*,\wh{S})$.
	\end{lem}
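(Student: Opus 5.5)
The plan is to translate both defining conditions through the limit-operator map $T\mapsto \wh{T}$ of Lemma~\ref{Lem:Properties}, using part (3) (compact $\iff$ limit operator zero) and part (2) (it is a $*$-homomorphism, hence linear and multiplicative).

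For the essentially Toeplitz part, $T\in\cle\clt$ means $S^*TS-T\in\clk$. Applying the map, this is equivalent to
\[
\wh{S}^*\wh{T}\wh{S}=\wh{T}.
\]
Since $\cle$ is finite-dimensional, $I-S^*S=0$ and $I-SS^*$ has finite rank, hence is compact. By parts (2) and (3), $\wh{S}^*\wh{S}=\wh{I}=I_{\wh{\clh}}$ and $\wh{S}\wh{S}^*=I_{\wh{\clh}}$, so $\wh{S}$ is unitary. Multiplying $\wh{S}^*\wh{T}\wh{S}=\wh{T}$ on the left by $\wh{S}$ gives $\wh{T}\wh{S}=\wh{S}\wh{T}$, i.e.\ $\wh{T}\in\cli(\wh{S})$. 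Conversely, $\wh{T}\wh{S}=\wh{S}\wh{T}$ gives $\wh{S}^*\wh{T}\wh{S}=\wh{T}$, so $\wh{S^*TS-T}=0$ and $S^*TS-T$ is compact.

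For the essentially Hankel part, $H\in\cle\clh$ means $S^*H-HS\in\clk$. By parts (2) and (3) this holds iff $\wh{S}^*\wh{H}=\wh{H}\wh{S}$, which is exactly $\wh{H}\in\cli(\wh{S}^*,\wh{S})$ with $X=\wh{S}^*$ and $Y=\wh{S}$. No unitarity is needed here.

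The only points needing care are that $\wh{I}$ is the identity on $\wh{\clh}$, which is immediate from the definition, and that finite-dimensionality of $\cle$ makes $I-SS^*$ compact, since it is the projection onto the constants $\cle$. With these in hand there is no real obstacle; the proof is bookkeeping.
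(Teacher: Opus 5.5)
Your proof is correct and is the routine verification the paper has in mind; the paper gives no argument beyond calling the lemma easy to verify. You apply parts (2) and (3) of Lemma~\ref{Lem:Properties} to $S^*TS-T$ and $S^*H-HS$. You then use the essential unitarity of $S$, which holds because $\cle$ is finite-dimensional, to pass between $\wh{S}^*\wh{T}\wh{S}=\wh{T}$ and $\wh{T}\wh{S}=\wh{S}\wh{T}$; the identity $\wh{S}\wh{S}^*=I$ is what the forward direction needs.
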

	
	It is worthy to mention that, since $S$  is essentially unitary on $H^2_\cle(\D)$, $\wh{S}$ is unitary on $\wh{H^2_\cle(\D)}$. 
    Now we are ready to prove the equivalence $(1)\iff (3)$ of Theorem \ref{thm:maintheorem}:
	
	\begin{prop}
		Let $A \in \clb(H^2_\cle(\D))$ then $A \in \cle\clt+\cle\clh$ if and only if $S^*A-AS \in \cle\clt$.  
	\end{prop}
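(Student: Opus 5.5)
The plan is to treat the easy direction by direct computation. The converse will be done in the Calkin algebra $\clq=\clb(H^2_\cle(\D))/\clk$, which is the corona algebra of the $\sigma$-unital algebra $\clk$. There we will produce an essentially Toeplitz "symbol part" via Proposition \ref{prop:corona}, with the required operator inequality verified through limit operators.

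\emph{($\Rightarrow$)} Write $A=T+H$ with $T\in\cle\clt$ and $H\in\cle\clh$. Then $S^*H-HS\in\clk\subseteq\cle\clt$. Since $S$ is essentially unitary, $T$ essentially commutes with both $S$ and $S^*$, so
\[
S^*T-TS\equiv T(S^*-S) \pmod{\clk}.
\]
This lies in $\cle\clt$ because $\cle\clt$ is a C$^*$-algebra containing $S,S^*$ and $\clk$. Hence $S^*A-AS\in\cle\clt$.

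\emph{($\Leftarrow$)} Let $B=S^*A-AS\in\cle\clt$, and let $\pi$ be the quotient map onto $\clq$. Set $s=\pi(S)$ (a unitary), $a=\pi(A)$, $b=\pi(B)$, so that $bs=sb$ and $s^*a-as=b$. The goal is to find $c\in\clq$ with $cs=sc$ and $b=c(s^*-s)$. Then lifting $c=\pi(T)$ gives $T\in\cle\clt$, and
\[
S^*(A-T)-(A-T)S\equiv B-T(S^*-S)\equiv 0 \pmod{\clk}.
\]
So $A-T\in\cle\clh$ and $A\in\cle\clt+\cle\clh$. To obtain $c$ we apply Proposition \ref{prop:corona} with $\cld=\{s,s^*\}$, to $a':=b$ and $b':=M(s^*-s)$ for a suitable constant $M$. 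The commutation hypotheses hold: $b$ commutes with $s$, and also with $s^*$ because $s$ is unitary. Everything is thus reduced to the inequality
\[
b^*b\leq M^2\,(s-s^*)^*(s-s^*)\quad\text{in }\clq.
\]

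This inequality is the main obstacle, and I would verify it in the limit-operator picture. By Lemma \ref{Lem:Properties}, $X\mapsto\wh X$ induces an injective $*$-homomorphism of $\clq$, so by Proposition \ref{prop:injective} it suffices to prove $\wh B^*\wh B\leq M^2|U-U^*|^2$, where $U=\wh S$ is unitary. Telescoping $s^*a-as=b$ as in the proof of Theorem A, and using that $\wh B$ commutes with $U$, gives
\[
U^{*m}\wh A-\wh A U^m=\wh B\,D_m(U),\qquad D_m(\lambda)=\sum_{k=0}^{m-1}\lambda^{2k-m+1},
\]
so that $\|\wh B D_m(U)\|\leq 2\|A\|$ for all $m$. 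Let $E$ be the spectral measure of $U$. Since $\wh B^*\wh B$ commutes with $U$, it commutes with every $E(\cdot)$. On the spectral set $\Omega_m$ where $\lambda^2$ lies within $c/m$ of $1$, one has $|D_m(\lambda)|\geq m/2$, which yields $\|\wh B E(\Omega_m)\|\leq 4\|A\|/m$. On $\Omega_m\setminus\Omega_{2m}$ we have $|\lambda-\bar\lambda|\asymp 1/m$, while away from $\lambda=\pm1$ the right-hand side is bounded below. On $\ker(U-U^*)$, letting $m\to\infty$ (this is where the ergodic Lemma \ref{lem:ergodic} applied to $U^2$ controls the fixed space) gives $\wh B=0$ there. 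Decomposing dyadically along $m=2^j$, one gets
\[
\wh B^*\wh B=\sum_j E_j\wh B^*\wh B E_j\leq C\sum_j 4^{-j}E_j\leq M^2|U-U^*|^2,
\]
which is the desired inequality. Pulling this back to $\clq$ and invoking Proposition \ref{prop:corona} finishes the proof.
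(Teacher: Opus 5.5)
Your proposal is correct. Its architecture matches the paper's: pass to limit operators, telescope $s^*a-as=b$ to get $\|\wh B D_m(U)\|\le 2\|A\|$, prove $\wh B^*\wh B\le M^2|U-U^*|^2$, pull this back to the Calkin algebra via the injective $*$-homomorphism and Proposition \ref{prop:injective}, and factor with Proposition \ref{prop:corona}. The paper writes the resulting factorization as $T=T'R+K_1$ with $R=S-S^*$ and concludes $A+T'\in\cle\clh$, which is your $A-T\in\cle\clh$ up to sign.

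Where you genuinely differ is in the proof of the key inequality. The paper splits the space into three pieces:
\begin{itemize}
\item $\ker(\wh S^2-I)$, where $\wh T$ vanishes directly because $|D_m|=m$ there (Claim 1);
\item $\ker(\wh S^2+I)$, where $\|\wh Rx\|=2\|x\|$;
\item $\ker(\wh S^4-I)^\perp$, where it averages the termwise bound $\|(\wh S^{2k}-\wh S^{*2k})\wh Tx\|^2\le 4\|\wh A\|^2\|\wh Rx\|^2$ over $k$ and uses the mean ergodic theorem for $\wh S^4$ to identify the average as $2\|\wh Tx\|^2$.
\end{itemize}
You instead use the spectral measure of $U$ and the closed form $D_m(e^{i\theta})=\sin(m\theta)/\sin\theta$ to get the localized estimate $\|\wh BE(\Omega_m)\|\le 4\|A\|/m$, and then sum dyadically. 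This is a symbol-level argument, showing that $\wh B$ is dominated by $|\lambda-\bar\lambda|$ near $\lambda=\pm1$. It needs no ergodic theorem and creates no artificial separate piece at $\pm i$. The price is an elementary estimate on the kernel and the dyadic bookkeeping. The paper's averaging trick, by contrast, gives the clean explicit constant $\max\{\|\wh T\|/2,\sqrt2\|\wh A\|\}$ with less analysis.

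Your parenthetical appeal to Lemma \ref{lem:ergodic} for $\ker(U-U^*)$ is unnecessary. Since $E(\{\pm1\})\le E(\Omega_m)$ for every $m$, your bound $\|\wh BE(\Omega_m)\|\le 4\|A\|/m$ already forces $\wh BE(\{\pm1\})=0$.

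In the forward direction, you also supply a justification that the paper omits: $S^*T-TS\in\cle\clt$ holds because $\cle\clt$ is a C$^*$-algebra containing $S$, $S^*$ and the compacts. The paper merely asserts this membership.
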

	
	\begin{proof}
		Let $A \in \cle\clt+\cle\clh$. Then there exists $H \in \cle\clh$ such that  $A-H =T\in \cle\clt$. Thus
		\begin{align*}
			&S^*(A-H)-(A-H)S =S^*T-TS\\
			\implies & S^*A-AS=S^*H-HS+S^*T-TS\\    
		\end{align*}
		Since $S^*H-HS \in \clk \subset \cle\clt$ and $S^*T-TS \in \cle\clt$, we get $S^*A-AS \in\cle\clt$. 
        Conversely, if $S^*A-AS=T \in\cle\clt$, 
        then by Lemma \ref{Lem:Properties} (2) and Lemma \ref{lem:ET},
		
		\begin{equation}\label{eq:n=1}
			\wh{S}^*\wh{A}-\wh{A}\wh{S}=\wh{T},
		\end{equation}
		where $\wh{T}\in \cli(S)$.  Note that if $A$ is compact, then the result holds trivially.  Additionally, if $T$ is compact, then $A \in \cle\clh$. Again we are done. Thus, we can assume that $A$ and $T$ are non-compact so that $\wh{A}$ and $\wh{T}$ are non-zero operators. Now, post-multiplying (\ref{eq:n=1}) by $\wh{S}$ and pre-multiplying by $\wh{S}^*$, respectively gives
		\begin{align*}
			\wh{S}^*\wh{A}\wh{S}-\wh{A}\wh{S}^2&=\wh{T}\wh{S}=\wh{S}\wh{T}\\
			\wh{S}^{*2}\wh{A}-\wh{S}^*\wh{A}\wh{S}&=\wh{S}^*\wh{T}
		\end{align*}
		
		Adding both the equations, we get
		\begin{equation}\label{eq:n=2}
			\wh{S}^{*2}\wh{A}-\wh{A}\wh{S}^2=(\wh{S}+\wh{S}^*)\wh{T}
		\end{equation}
		
		Again, post-multiplying (\ref{eq:n=2}) by $\wh{S}$, pre-multiplying (\ref{eq:n=1}) by $\wh{S}^{*2}$ and adding them together yield
		\[
		\wh{S}^{*3}\wh{A}-\wh{A}\wh{S}^3=\left(I+ \wh{S}^2+\wh{S}^{*2}\right)\wh{T}
		\]
		
		We claim that 
		\[
		\wh{S}^{*n}\wh{A}-\wh{A}\wh{S}^n=
		\begin{cases}
			\sum\limits_{k=1}^{n/2}\left(\wh{S}^{2k-1}+\wh{S}^{*2k-1}\right)\wh{T}, & \text{ for }n \text{ even}\\  
			\wh{T}+\sum\limits_{k=1}^{(n-1)/2}\left(\wh{S}^{2k}+\wh{S}^{*2k}\right)\wh{T}, & \text{ for } n \text{ odd}.
		\end{cases}
		\]
		
		We will prove the case for $n$ even by induction. The other can be proved in a similar way.
		
		Let the claim is true for $n-1$, which is odd. Consider
		\begin{align*}
			\wh{S}^{*n}\wh{A}-\wh{A}\wh{S}^n &=\wh{S}^{*n}\wh{A}-\wh{S}^{*{n-1}}\wh{A}\wh{S}+\wh{S}^{*{n-1}}\wh{A}\wh{S}-\wh{A}\wh{S}^n\\
			&=\wh{S}^{*n-1}\left(\wh{S}^*\wh{A}-\wh{A}\wh{S}\right)+ \left(\wh{S}^{*n-1}\wh{A}-\wh{A}\wh{S}^{n-1}\right)\wh{S} \\
			&=\wh{S}^{*n-1}\wh{T}+\wh{T}\wh{S}+\sum\limits_{k=1}^{(n-2)/2}\left(\wh{S}^{2k}+\wh{S}^{*2k}\right)\wh{T}\wh{S}\\
			&=\sum\limits_{k=1}^{n/2}\left(\wh{S}^{2k-1}+\wh{S}^{*2k-1}\right)\wh{T},
		\end{align*}
		where the last equality holds because of the fact that $\wh{T}\in \cli(S)$. This proves the claim. As a consequence, we get, for each $n$,
		
		\begin{equation}\label{eq:uniform1}
			\left\|\sum\limits_{k=1}^n \left(\wh{S}^{2k-1}+\wh{S}^{*2k-1}\right)\wh{T} \right\| \leq \|\wh{S}^{*2n}\wh{A}-\wh{A}\wh{S}^{2n}\| \leq 2\|\wh{A}\|,
		\end{equation}
		
		and
		\begin{equation}\label{eq:uniform2}
			\left\|\wh{T}+\sum\limits_{k=1}^{n}\left(\wh{S}^{2k}+\wh{S}^{*2k}\right)\wh{T}\right\| \leq 2\|\wh{A}\|.
		\end{equation}

		Define $R=S-S^*$.
		Our aim is to prove that $T=(S-S^*)T'+K_1$ where $T'$ is an essentially Toeplitz operator and $K_1$ is compact. We will complete the proof through a series of claims:\vspace{5pt}

		\emph{Claim 1:} $\cln(\wh{R}) \subseteq \cln(\wh{T})$.\\
		Let $x$ be a unit vector in $\cln(\wh{R})$, that is, $\wh{S}x=\wh{S}^*x$. Pre-multiplying by $\wh{S}$ yields $\left(\wh{S}^2-I\right)x=0$.  Then (\ref{eq:uniform2}) implies
		\begin{align*}
			\left\|\wh{T}x + \sum\limits_{k=1}^n 2\wh{T}\wh{S}^{2k}x \right\| &\leq 2\|\wh{A}\|\\
			\left\|\wh{T}x +2\sum\limits_{k=1}^n \wh{T}x \right\| & \leq 2\left\|\wh{A}\right\|\\
			(2n+1)\left\|\wh{T}x\right\| &\leq 2\left\|\wh{A}\right\|
		\end{align*}
		This is true for all $n$ and also $\wh{A}$ is assumed to be non-zero,
        thus $\wh{T}x =0$.\vspace{5pt}
		
		\emph{Claim 2:} $\wh{T}^*\wh{T} \leq \lambda^2 \wh{R}^*\wh{R}$ for some $\lambda \in \mathbb{R}$.\\

		Let $\wh{R}_n=\sum\limits_{k=1}^n\left(\wh{S}^{2k-1}+\wh{S}^{*2k-1}\right)$, then 
		\[
		\wh{R_n}\wh{R}=\wh{S}^{2n}-\wh{S}^{*2n}
		\]
		which implies 
		\begin{equation}\label{eq:mean}
			\frac{1}{n}\sum\limits_{k=1}^n \wh{R_k}\wh{R}\wh{T}=\frac{1}{n}\sum\limits_{k=1}^n \left(\wh{S}^{2k}-\wh{S}^{*2k}\right)\wh{T}
		\end{equation}
		Firstly, for $x \in \wh{H^2_\cle(\D)}$, by (\ref{eq:uniform1}),
		\begin{equation}\label{eq:LHS}
			\frac{1}{n}\sum\limits_{k=1}^n \left\|\wh{R_k}\wh{R}\wh{T}x\right\|^2\leq \frac{1}{n} \sum\limits_{k=1}^n 4\left\|\wh{A}\right\|^2\left\|\wh{R}x\right\|^2= 4\left\|\wh{A}\right\|^2\left\|\wh{R}x\right\|^2
		\end{equation}
		Now,  consider
		\begin{align}
			\frac{1}{n}\sum\limits_{k=1}^n \left\| \left(\wh{S}^{2k}-\wh{S}^{*2k}\right)\wh{T}x\right\|^2
			&= -\frac{1}{n} \sum\limits_{k=1}^n \left\langle  (\wh{S}^{2k}-\wh{S}^{*2k})^2\wh{T}x, \wh{T}x\right\rangle \notag\\
			&= -\frac{1}{n}\sum\limits_{k=1}^n\left\langle \left(\wh{S}^{4k}+\wh{S}^{*4k}-2I\right)\wh{T}x,\wh{T}x\right\rangle \notag \\
			&=2\left\|\wh{T}x\right\|^2-\frac{1}{n} \sum\limits_{k=1}^n\left\langle \left(\wh{S}^{4k}+\wh{S}^{*4k}\right)\wh{T}x,\wh{T}x\right\rangle.\label{eq:PM commutes with T hat - 1}
		\end{align}
		Note that $\clm_{\wh{S}^4}=\{u \in \wh{H^2_\cle(\D)}: \wh{S}^4 u=u\}=\clm_{\wh{S}^{*4}}$. Then $P_{\clm_{\wh{S}^4}}=P_{\clm_{\wh{S}^{*4}}}$. We will denote it by $\clm$. Thus by Lemma \ref{lem:ergodic},
		\[
		\lim\limits_{n\rightarrow \infty}\left\|\frac{1}{n}\sum\limits_{k=1}^n \wh{S}^{4k}\wh{T}x-P_{\clm}\wh{T}x  \right\|=  \lim\limits_{n\rightarrow \infty}\left\|\frac{1}{n}\sum\limits_{k=1}^n \wh{S}^{*4k}\wh{T}x-P_{\clm}\wh{T}x  \right\|= 0
		\]
		Thus
		\begin{equation}\label{eq:PM commutes with T hat - 2}
		\frac{1}{n}\left\| \sum\limits_{k=1}^n \left(\wh{S}^{4k}+\wh{S}^{*4k}\right)\wh{T}x  \right\| \rightarrow 2\left\|P_\clm \wh{T}x\right\| \quad \text{ as } {n \rightarrow \infty}
		\end{equation}
		Note that $\clm$ reduces $\widehat{T}$ so that $P_\clm$ commutes with $\wh{T}$, therefore if $x \in \clm^\perp$ \eqref{eq:PM commutes with T hat - 1} and \eqref{eq:PM commutes with T hat - 2} together yield
		\[
		\frac{1}{n}\sum\limits_{k=1}^n \left\| \left(\wh{S}^{2k}-\wh{S}^{*2k}\right)\wh{T}x\right\|^2\rightarrow 2\left\|\wh{T}x\right\|^2.
		\]
		In turn, (\ref{eq:mean}) and (\ref{eq:LHS}) imply
		
		\begin{equation}\label{eq:1}
			\left\|\wh{T}x\right\|^2 \leq 2\left\|\wh{A}\right\|^2\left\|\wh{R}x\right\|^2, \quad  (x \in \clm^\perp).
		\end{equation}
		
		We are left with the case when $x \in \clm$. By Claim 1, it is enough to prove that $\left\| \wh{T}x\right\|^2 \leq \lambda^2 \left\| \wh{R}x \right\|^2$ for all $x \in \clr(\wh{R})$. For $x \in \clr(\wh{R})\cap \clm$,  
		\[
		0=\left(\wh{S}^4-I\right)x=\left(\wh{S}^2-I\right)\left(\wh{S}^{2}+I\right)x
		\]
		which guarantees $\left(\wh{S}^{2}+I\right)x=0$. Indeed, if not, then $\left(\wh{S}^{2}+I\right)x \in \cln\left(\wh{R}\right)\cap \clr\left(\wh{R}\right).$ Now
		$\wh{R}x=\left(\wh{S}-\wh{S}^*\right)x=2\wh{S}x,$
		and  hence 
		\begin{equation}\label{eq:2}
			\left\|\wh{T}x\right\|^2\leq \left\|\wh{T}\right\|^2\|x\|^2=\frac{1}{4}\left\|\wh{T}\right\|^2\left\|\wh{R}x\right\|^2.
		\end{equation}
		Combining (\ref{eq:1}) and (\ref{eq:2}), we get
		\[
		\left\|\wh{T}x\right\|^2\leq \lambda^2\left\|\wh{R}x\right\|^2 \quad \text{ for all }x \in \wh{H^2_\cle(\D)},
		\]
		where $\lambda=max\left\{\frac{\left\|\wh{T}\right\|}{2},\sqrt{2}\left\|\wh{A}\right\|\right\}$.\vspace{5pt}

		\emph{Claim:3} $T=(S-S^*)T'+K_1$ where $T'$ is an essential Toeplitz operator and $K_1$ compact.\\
		Denote $\clc(\clk)=\clb(H^2_\cle(\D))/\clk$ as the Calkin algebra of $H^2_\cle(\D)$.
		and define a map $\Phi:\clc(\clk) \rightarrow \wh{H^2_\cle(\D)}$ by
		\[
		\Phi(T +\clk):=\wh{T}, \quad (T \in \clb(H^2_\cle(\D))).
		\]
		
		Then it is easy to verify that $\Phi$ is a $*$-homomorphism. Moreover, $\Phi$ has trivial kernel. As if, $T+\clk \in \cln(\Phi)$, then $\wh{T}=0$. By Lemma \ref{Lem:Properties}, $T$ is compact. Now Claim 2 yields $\Phi(T^*T+\clk) \leq \lambda^2 \Phi(R^*R +\clk)$. Since $\Phi$ has trivial kernel, by Proposition \ref{prop:injective}, we get
		
		\[
		T^*T+\clk \leq \lambda^2 R^*R +\clk.
		\]
		
		Using Proposition \ref{prop:corona} on the corona C$^*$-algebra $\clc(\clk)$ of $\clk$ and let $\cld=\{S+\clk\}$, we get that there exist $T'+\clk$ commuting with $S+\clk$ such that
		\[
		T+\clk=(T'+\clk)(R+\clk),
		\]
		that is, there exist $K_1 \in \clk$ such that
		\[
		T= T'R +K_1.
		\]
		
		It is a routine check that $T'+\clk$ commuting with $S+\clk$ implies $T'S-ST' \in \clk$, i.e., $T'$ is an essentially Toeplitz operator. This proves the claim.
		
		Now,  plugging the value of $T$, and using the fact that $T'\in \cle\clt$,  we get
		\begin{align*}
			&S^*A-AS=T'(S-S^*)+K_1\\
			\implies & S^* A-AS=T'S-S^*T'+K_2 \quad (K_2 \text{ compact})\\
			\implies & S^*(A+T')-(A+T')S=K_2
		\end{align*}
		which implies $H'=A+T'$ is an essentially Hankel operator. Hence $A=H'-T' \in \cle\clt+\cle\clh$. This completes the proof.
		
	\end{proof}
	
	As a by-product of this theorem, we provide a characterization for the essential commutant of the operator $S+S^*$ on $H^2_\cle(\D)$. 
	
	\begin{cor}
		Let $X \in \clb(H^2_{\cle}(\D))$. Then $X$ commutes with $S+S^*$ modulo compact operator if and only if $X \in \cle\clt+\cle\clh$.
		
	\end{cor}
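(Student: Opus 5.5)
The plan is to reduce the corollary to Theorem \ref{thm:maintheorem}, and specifically to the equivalence $(1)\iff(4)$ there. Write $S=T_z$ and let $\clk$ be the compact operators. The first step is to expand the commutator:
\[
X(S+S^*)-(S+S^*)X=(XS-S^*X)+(XS^*-SX).
\]
The idea is to compare this with the compact operator in condition (4) of Theorem \ref{thm:maintheorem}. Since $\cle$ is finite-dimensional, $S$ is essentially unitary: $I-SS^*$ and $I-S^*S$ are compact. So modulo $\clk$ we may treat $S^*$ as $S^{-1}$ when multiplying.

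\emph{Forward direction.} If $X=T+H$ with $T\in\cle\clt$ and $H\in\cle\clh$, then $T$ essentially commutes with both $S$ and $S^*$. For $H$ we have $S^*H-HS\in\clk$, and hence $SH-HS^*\in\clk$ as well: multiply $S^*H-HS$ by $S$ on the left and by $S^*$ on the right, then use essential unitarity. Adding these gives $(S+S^*)H-H(S+S^*)\in\clk$, and together with the Toeplitz part this shows $X$ essentially commutes with $S+S^*$.

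\emph{Converse.} Suppose $K:=X(S+S^*)-(S+S^*)X\in\clk$. Compute
\[
S^*KS = S^*XS^2+S^*XSS^*S-S^*SS^*XS-S^{*2}XS \equiv S^*XS^2+S^*X-XS-S^{*2}XS \pmod{\clk}.
\]
Here we used $SS^*\equiv I$ and $S^*S=I$; each substitution only introduces a compact term times bounded operators. The resulting expression is exactly the negative of the operator in condition (4) of Theorem \ref{thm:maintheorem}. Since $S^*KS$ is compact, condition (4) holds, and the theorem gives $X\in\cle\clt+\cle\clh$.

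The only delicate point is the bookkeeping of where essential unitarity is used, in particular replacing $SS^*$ by $I$ inside a product. This is legitimate because $\clk$ is a two-sided ideal. I expect no real obstacle, since the hard analytic work is already contained in Theorem \ref{thm:maintheorem}.
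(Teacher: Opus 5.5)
Your argument matches the paper's proof: the forward direction checks that each of $ST-TS$, $S^*T-TS^*$, $SH-HS^*$ and $S^*H-HS$ is compact, and the converse compresses the commutator as $S^*KS$, which lands on the negative of the operator in condition (4) of Theorem \ref{thm:maintheorem}. One small slip in your expansion: the middle terms should be $S^*XS^*S=S^*X$ and $S^*SXS=XS$ (you wrote stray factors of $SS^*$), and with that fix the identity is exact using only $S^*S=I$, so the converse does not need $SS^*\equiv I$.
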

	
	\begin{proof}
		Let $X =T+H \in \cle\clt+\cle\clh$, where $T \in \cle\clt$ and $H \in \cle\clh$.
		Consider
		\[
		(S+S^*)X-X(S+S^*)= (ST-TS)+(S^*T-TS^*)+(SH-HS^*)+(S^*H-HS).
		\]
		Each summand of the right hand side is compact and hence the result holds. Conversely, let $(S+S^*)X-X(S+S^*)=K$(compact). Then
		\begin{align*}
			&S^*(S+S^*)XS-S^*X(S+S^*)S=S^*KS\\
			\implies & XS+S^{*2}XS-S^*XS^2-S^*X= K_1
		\end{align*}
		where $K_1=S^*KS$ (compact). Thus by Theorem \ref{thm:maintheorem}, $X \in \cle\clt+\cle\clh$.
	\end{proof}

\end{document}